\numberwithin{equation}{section}
\newcommand{\R}{{\mathbb R}}
\newcommand{\Z}{{\mathbb Z}}
\newcommand{\T}{{\mathbb T}}
\renewcommand{\L}{{\mathcal L}}
\newcommand{\e}{\epsilon}
\newcommand{\BV}{{\mathrm{BV}}}
\newcommand{\TV}{{\mathrm{TV}\,}}
\DeclareMathOperator{\dv}{div}
\DeclareMathOperator{\supp}{supp}
\def\Xint#1{\mathchoice
	{\XXint\displaystyle\textstyle{#1}}%
	{\XXint\textstyle\scriptstyle{#1}}%
	{\XXint\scriptstyle\scriptscriptstyle{#1}}%
	{\XXint\scriptscriptstyle\scriptscriptstyle{#1}}%
	\!\int}
\def\XXint#1#2#3{{\setbox0=\hbox{$#1{#2#3}{\int}$}
		\vcenter{\hbox{$#2#3$}}\kern-.5\wd0}}
\newtheorem{thm}{Theorem}[section]
\newtheorem{prop}[thm]{Proposition}
\newtheorem{lem}[thm]{Lemma}
\newtheorem{definition}{Definition}
\theoremstyle{definition}
\newtheorem*{rem*}{Remark}
\title{Rectifiability of entropy productions for weak solutions of the 2D eikonal equation with supercritical regularity}
\date{\today}
\author{Xavier Lamy\thanks{Institut de Math\'ematiques de Toulouse; UMR 5219, Universit\'e de Toulouse; CNRS, UPS IMT, F-31062 Toulouse Cedex 9, France. Email: xlamy@math.univ-toulouse.fr.}
\thanks{Institut Universitaire de France (IUF)}
 \qquad  Elio Marconi\thanks{Dipartimento di Matematica 'Tullio Levi Civita', Universit\`a di Padova, via Trieste 63, 35121 Padova (PD), Italy. Email: elio.marconi@unipd.it.}}
\begin{document}

\maketitle

\begin{abstract}
%
%
Weak solutions $m\colon\Omega\subset\R^2\to\R^2$ of the eikonal equation
\begin{align*}
|m|=1\text{ a.e. and }\mathrm{div}\: m =0\,,
\end{align*}
arise naturally as sharp interface limits of bounded energy configurations in various physically motivated models, including the Aviles-Giga energy.
The distributions $\mu_\Phi=\mathrm{div}\,\Phi(m)$, defined for a class of smooth vector fields $\Phi$ called entropies,
carry information about singularities and energy cost.
If these entropy productions are Radon measures, a long-standing conjecture predicts that
 they must be concentrated on the 1-rectifiable jump set of $m$
 -- as they do if $m$ has bounded variation (BV) thanks to the chain rule. 
We establish this concentration property,
for a large class of entropies,
 under the Besov regularity assumption 
\begin{align*}
m\in B^{1/p}_{p,\infty} \quad
\Leftrightarrow
\quad
\sup_{h\in \mathbb R^2\setminus\lbrace 0\rbrace} \frac{\|m(\cdot +h)-m\|_{L^p
}
}{|h|^{1/p}} <\infty\,,
\end{align*}
for any $1\leq p<3$, thus going
 well beyond the BV setting ($p=1$) and leaving only the borderline case $p=3$ open.
\end{abstract}

\section{Introduction}

For an open set $\Omega\subset\R^2$, 
we consider weak solutions $m\colon\Omega\to\R^2$ of the eikonal equation
\begin{align}\label{eq:eik}
|m|=1\text{ a.e. in }\Omega,\quad \dv m =0\text{ in }\mathcal D'(\Omega).
\end{align}
If $\Omega$ is simply connected, this is equivalent to the existence of a Lipschitz function $u\colon\Omega\to \R$ satisfying
 $m=i\nabla u$ and  
 \begin{align*}
 |\nabla u|=1 \text{ a.e. in }\Omega\,,
 \end{align*}
 which is classically referred to as the eikonal equation.

If $m\colon\Omega\to\R^2$ is a $C^1$ solution of the eikonal equation \eqref{eq:eik}
then the chain rule provides a whole family of conservation laws:
$\dv\Phi(m)=0$ for all 
$\Phi\in C^1(\mathbb S^1;\R^2)$
 such that 
 $\partial_\theta\Phi(e^{i\theta})\cdot e^{i\theta} =0$ for all $\theta\in\R$.
 
For a general weak solution $m\colon\Omega\to\R^2$ of the eikonal equation \eqref{eq:eik},
 the distributions $\dv\Phi(m)$ 
 may not be zero,
 and they carry information on how singular $m$ is.
They were first introduced in the context of the eikonal equation in \cite{dkmo01}, 
and called entropy productions by analogy with the theory of scalar conservation laws.
We denote by
\begin{align}
\mathrm{ENT}=
\big\lbrace
\Phi\in C^{1,1}(\mathbb S^1;\R^2)\colon\frac{d}{d\theta}\Phi(e^{i\theta})\cdot e^{i\theta}=0\;\forall\theta\in\R\big\rbrace\,,
\end{align}
the set of all $C^{1,1}$ entropies.

Weak solutions $m$ of the eikonal equation \eqref{eq:eik}
 whose entropy productions $\dv\Phi(m)$ are finite Radon measures play an important role in the theory of the Aviles-Giga energy \cite{avilesgiga99}. 
The structure of these finite-entropy solutions
 is not fully understood,
 but it is known that they share with functions of bounded variation (BV) several fine properties.
Note that, if $m\in BV(\Omega;\R^2)$ is a weak solution of \eqref{eq:eik}, 
then by the BV chain rule its entropy productions are measures concentrated on the $\mathcal H^1$-rectifiable jump set $J_m$.
For a general finite-entropy solution, denote by
$\nu$ the supremum measure
\begin{align*}
\nu =\bigvee_{\Phi\in\mathrm{ENT},\,\|\Phi\|_{C^{1,1}}\leq 1}|\dv\Phi(m)|\,.
\end{align*}
 In \cite{ODL03} the authors prove that
 the jump set 
\begin{equation}\label{e:defJ}
J_m \doteq \left\{  x \in \Omega : \limsup_{r\to 0}\frac{\nu(B_r(x))}{r}>0 \right\}\,,
\end{equation}
is $\mathcal H^1$-rectifiable and
 $m$ admits left and right $L^1$ traces $\mathcal H^1$-a.e. along $J_m$.
According to a long-standing conjecture on the Aviles-Giga energy \cite{avilesgiga99},
entropy productions should be concentrated on that jump set.

Among all weak solutions of \eqref{eq:eik}, 
the finite-entropy solutions can be characterized, at least locally, in terms of Besov $B^{1/3}_{3,\infty}$ regularity \cite{GL20}.
For $s\in (0,1)$ and $p\geq 1$, a map $m\in L^p(\Omega)$ has 
the Besov regularity $B^{s}_{p,\infty}$ if and only if the seminorm
\begin{align*}
|m|_{B^{s}_{p,\infty}} = \sup_{h\in\R^2\setminus\lbrace 0\rbrace} 
\frac{1}{|h|^s}\|m(\cdot +h)-m\|_{L^p(\Omega\cap(\Omega -h))}\,,
\end{align*}
is finite \cite[\textsection{2.5.12}]{triebel83}.
Between the spaces $BV(\Omega;\mathbb S^1)$ and $B^{1/3}_{3,\infty}(\Omega;\mathbb S^1)$ lies the intermediate scale of spaces
$B^{1/p}_{p,\infty}(\Omega;\mathbb S^1)$, for $1<p<3$.
We prove that the concentration conjecture is true for solutions of \eqref{eq:eik} with that intermediate regularity,
and for entropies in the class
\begin{align}\label{eq:tildeENT}
\widetilde{\mathrm{ENT}} \doteq \left\{  \Phi \in \mathrm{ENT}: \frac{d}{d\theta}\Phi(-e^{i\theta})= - \frac{d}{d\theta}\Phi(e^{i\theta})
\;\forall\theta\in\R\right\},
\end{align}
which corresponds to odd entropies plus constants.
This restriction is due to the same technical reasons as in \cite{marconi21ellipse} (where this class of entropies is denoted by $\mathcal E_\pi$).
The fundamental entropies introduced in \cite{JK00} to establish a sharp lower bound for the Aviles-Giga energy (see also \cite{ADM}) are odd, and therefore covered by our result.

\begin{thm}\label{t:rectif}
Let $\Omega\subset\R^2$ a bounded open set,  $m\colon \Omega\to\mathbb R^2$
a weak solution of the eikonal equation
 \eqref{eq:eik}, 
and assume that
 $m\in B^{  1/p}_{p,\infty}(\Omega)$ for some $p\in [1,3)$.
 Then the entropy productions of $m$ are 1-rectifiable,
 \begin{align*}
 \dv\Phi(m) = \mathbf n\cdot \big(\Phi(m^+)-\Phi(m^-) \big) \,\mathcal H^1 \llcorner J_m\,,
 \end{align*}
 for all $\Phi\in\widetilde{\mathrm{ENT}}$.
\end{thm}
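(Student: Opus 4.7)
The plan is to decompose $\mu_\Phi := \dv\Phi(m)$ as $\mu_\Phi\llcorner J_m + \mu_\Phi\llcorner(\Omega\setminus J_m)$, identify the first piece with the claimed rectifiable expression, and show that the second vanishes. Since $|m|=1$, the pointwise bound $|m(x+h)-m(x)|^{3}\le 2^{3-p}|m(x+h)-m(x)|^{p}$ yields the embedding $B^{1/p}_{p,\infty}\cap L^\infty\hookrightarrow B^{1/3}_{3,\infty}$, so by \cite{GL20} $\mu_\Phi$ is a finite Radon measure and the results of \cite{ODL03} apply: $J_m$ is $\mathcal H^1$-rectifiable and one-sided $L^1$-traces $m^\pm$ exist along $J_m$. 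A localisation argument — testing $\mu_\Phi$ against smooth functions supported in narrow tubular neighbourhoods of $J_m$ and passing to the limit using the traces of $\Phi(m)$ — then yields
\[
\mu_\Phi\llcorner J_m \;=\; \mathbf n\cdot(\Phi(m^+)-\Phi(m^-))\,\mathcal H^1\llcorner J_m,
\]
so it remains to prove that the residual part $\mu_\Phi^r:=\mu_\Phi\llcorner(\Omega\setminus J_m)$ vanishes.

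The core analytic ingredient is a commutator/mollification estimate. Mollifying at scale $\e$ with a radial $\rho_\e$ and writing $m_\e:=m*\rho_\e$ gives a decomposition $\dv\Phi(m_\e) = \dv\Phi(m)*\rho_\e + R_\e$. Using $|m|=1$, the entropy condition $\partial_\theta\Phi(e^{i\theta})\cdot e^{i\theta}=0$, and — crucially — the oddness condition defining $\widetilde{\mathrm{ENT}}$, the remainder $R_\e$ can be rewritten so that its pointwise bound involves a \emph{symmetric} combination of increments $m(x+h)-m(x)$ and $m(x-h)-m(x)$ rather than single differences. This quadratic cancellation, combined with the Besov seminorm, is expected to produce a quantitative estimate of the form
\[
\|R_\e\|_{L^1(\Omega')} \;\le\; C\,\|\Phi\|_{C^{1,1}}\,|m|^p_{B^{1/p}_{p,\infty}}\,\e^{\alpha(p)},
\]
on every $\Omega'\Subset\Omega$, with $\alpha(p)>0$ for $p<3$ and $\alpha(3)=0$; the failure of the gain at the endpoint is precisely why $p=3$ is left open.

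To rule out $\mu_\Phi^r$, observe that by the very definition \eqref{e:defJ} we have $\nu(B_r(x))/r\to 0$ (and hence $|\mu_\Phi|(B_r(x))/r\to 0$) at every $x\notin J_m$, so $\mu_\Phi^r$ is a priori singular with respect to $\mathcal H^1$. To upgrade this lower-density information to $\mu_\Phi^r=0$, I would follow the Lagrangian/tangent-line structure developed in \cite{marconi21ellipse} for the class $\mathcal E_\pi=\widetilde{\mathrm{ENT}}$: at $|\mu_\Phi^r|$-a.e.\ point $x$, along a subsequence $r\to 0$ the rescalings $m_r(y):=m(x+ry)$ admit a tangent solution with one-dimensional (planar) structure on which the entropy productions vanish. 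Coupling this blow-up analysis with the commutator estimate at a matched scale $\e\sim r$ then forces $|\mu_\Phi|(B_r(x))=o(r)$ uniformly for $x$ in compact subsets of $\Omega\setminus J_m$, and a Vitali covering argument yields $|\mu_\Phi^r|=0$.

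The main obstacle is the interplay between the two steps above. The oddness assumption is precisely what provides the quadratic cancellation needed to obtain a positive exponent $\alpha(p)$ strictly below the threshold $p=3$; without it — and already at intermediate $p$ — a genuinely new idea would be required. Moreover, quantifying $\alpha(p)$ uniformly and slaving the mollification scale $\e$ to the blow-up radius $r$ in a way compatible with the Lagrangian framework of \cite{marconi21ellipse} (so that the $L^1$ bound on $R_\e$ translates into mass control on $|\mu_\Phi|$ over balls of comparable radius) is the technically delicate part, and mirrors the sharpest estimates of that reference.
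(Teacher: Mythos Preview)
Your outline has a genuine gap at its core: the commutator estimate you posit does not exist in the form you need, and the role you assign to oddness is not the correct one.

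First, the claim that oddness of $\Phi$ produces a ``symmetric combination of increments'' and hence an extra power $\e^{\alpha(p)}$ in $\|R_\e\|_{L^1}$ is a misconception. The standard commutator identity for the eikonal equation (as in \cite{DeI,GL20}) already yields a cubic bound $|D^h m|^3$ on $\dv\Phi(m_\e)=\Psi(m_\e)\cdot\nabla w_\e$ for \emph{every} $\Phi\in\mathrm{ENT}$, with no parity assumption; oddness buys nothing additional there. Moreover, even if a bound $\|R_\e\|_{L^1}\lesssim \e^{\alpha(p)}$ held, it would only tell you that $\dv\Phi(m_\e)$ approximates $\dv\Phi(m)$ well in a global $L^1$ sense --- it carries no information separating $J_m$ from its complement, which is the entire point. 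Your density observation that $|\mu_\Phi|(B_r(x))/r\to 0$ for $x\notin J_m$ is already built into the definition of $J_m$ and is vacuous on its own (Lebesgue measure satisfies it), so the remaining work is pushed entirely onto a blow-up/Vitali step that you leave unspecified and that would need a genuinely new uniform estimate to close.

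What the paper actually does is different in kind. The restriction to $\widetilde{\mathrm{ENT}}$ enters not through commutators but because only entropies $\Phi_\psi$ with $\pi$-periodic $\psi$ admit the Lagrangian disintegration formula \eqref{e:repr_entr} from \cite{marconi21ellipse}. The supercritical Besov hypothesis is exploited via Proposition~\ref{p:ent_prod}: for $p=2+\alpha<3$ one obtains a \emph{uniform} bound on $|\dv\Phi(m)|(\Omega)$ over the larger family $\|\Phi\|_{C^{1,\alpha}}\le 1$ (not merely $C^{1,1}$). This is the key gain. One then builds an explicit $\delta$-dependent family of piecewise-linear $\psi$'s, bounded in $C^{0,\alpha}$ but blowing up in $C^{0,1}$ as $\delta\to 0$, whose associated entropies detect the continuous part of $D\gamma_s$ along Lagrangian trajectories. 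Proposition~\ref{p:estim_small_jumps} converts the uniform $C^{1,\alpha}$ control into the estimate
\[
\int_\Gamma |\tilde D\gamma_s|(\{\gamma_x\notin J_m^\delta\})\,d\omega \;\lesssim\; \delta^{1-\alpha}\cdot(\text{finite}),
\]
and sending $\delta\to 0$ kills the diffuse part directly --- no blow-up, no Vitali. Your proposal is missing this mechanism entirely.
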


\medskip

To describe the ideas behind Theorem~\ref{t:rectif},
let us consider first a solution $m\in \BV(\Omega;\mathbb S^1)$ of \eqref{eq:eik}.
Then, by the $BV$ chain rule we have
\[
|\dv \Phi(m)| \le C_\Phi |m^+-m^-|^3d \mathcal H^1\llcorner J_m.
\] 
for every entropy $\Phi \in \mathrm{ENT}$.
Moreover, for any
$p\in [1,3)$ we have
\[
\int_{J_m}|m^+-m^-|^p d\mathcal H^1 \le \|m\|^p_{B^{1/p}_{p,\infty}}\,.
\]
As a consequence, the contribution of jumps smaller than
a threshold $\delta>0$ is controlled by
\begin{align}\label{e:main}
&
|\dv \Phi(m)| (\Omega \setminus  (J_m \cap  \{|m^+-m^-|\ge \delta\}) 
\nonumber
\\
& \leq  C_\Phi \int_{J_m \cap \{|m^+-m^-|\le \delta\}} |m^+-m^-|^3 d \mathcal H^1 
\leq  C_\Phi\delta^{3-p} \|m\|^p_{B^{1/p}_{p,\infty}}.
\end{align}
Here we assumed that $m\in BV(\Omega;\mathbb S^1)$
to ensure that $\dv \Phi(m)$ is concentrated on $J_m$,
 but the estimate does not depend on the total variation of $m$.
Moreover, 
a structure result 
 proved in \cite{marconi21ellipse}
about the continuous part of the entropy production
 allows to interpret it as being generated by infinitesimally small jumps.
It is therefore natural to conjecture that the estimate \eqref{e:main} should be true
for solutions $m$ not necessarily of bounded variation.
We prove indeed a similar estimate in
 Proposition \ref{p:estim_small_jumps}, 
 and the main result then follows by letting $\delta\to 0$.

To technically implement these ideas, 
we actually have to argue along trajectories of a Lagrangian representation of $m$,
also introduced in \cite{marconi21ellipse}, 
and which can only provide information on entropies in \eqref{eq:tildeENT}.
The control on the continuous part of entropy production 
at the level of these Lagrangian trajectories
is obtained by using a singular family of entropies,
whose entropy productions are uniformly bounded thanks
to the supercritical Besov regularity assumption.

\medskip

\section{Entropy productions and Besov regularity}\label{s:ent_prod}

In this section we prove that the supercritical Besov regularity assumption provides uniform control
 on entropy productions over families of entropies which 
are unbounded in $C^{1,1}$.

\begin{prop}\label{p:ent_prod}
Let $\Omega\subset\R^2$ a bounded open set,  $m\colon \Omega\to\mathbb S^1$ with $\dv m=0$ and assume that
 $m\in B^{  1/(2+\alpha)}_{2+\alpha,\infty}(\Omega)$ for some $\alpha\in (0,1)$.
 Then we have
 \begin{align*}
 \bigg(\bigvee_{\|\Phi\|_{C^{1,\alpha}} \leq 1} |\dv \Phi(m)|\bigg)(\Omega) \leq C \|m\|^{2+\alpha}_{B^{1/(2+\alpha)}_{2+\alpha,\infty}(\Omega)}\,,
 \end{align*}
 for some $C=C(\alpha)>0$, where the supremum of measure is taken over all $\Phi\in \mathrm{ENT}$ such that $\|\Phi\|_{C^{1,\alpha}} \leq 1$.
\end{prop}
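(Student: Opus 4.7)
The plan is to mollify $m$ at scale $\epsilon$ and derive a uniform-in-$\epsilon$ $L^1$ bound on $\dv\Phi(m_\epsilon)$ that factors through $\|\Phi\|_{C^{1,\alpha}}$ only. Since $\dv\Phi(m_\epsilon)\to \dv\Phi(m)$ in $\mathcal{D}'(\Omega)$, such a pointwise bound $|\dv\Phi(m_\epsilon)(x)|\leq C\, G_\epsilon(x)$ with $G_\epsilon$ independent of $\Phi$ (for $\|\Phi\|_{C^{1,\alpha}}\leq 1$) will yield, by weak-$*$ compactness of $G_\epsilon\,dx$ along a subsequence, a common dominating Radon measure for the whole family $\{|\dv\Phi(m)|\}$, from which the supremum estimate follows.

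Starting from the identity
\begin{equation*}
(\dv\Phi(m))_\epsilon(x) = \iint \nabla\rho_\epsilon(y)\rho_\epsilon(z)\cdot[\Phi(m(x-y))-\Phi(m(x-z))]\,dy\,dz,
\end{equation*}
obtained from commutativity of mollification with $\dv$ and the cancellation $\int\nabla\rho_\epsilon=0$, the key analytic input is an entropy expansion: for $a,b\in\mathbb{S}^1$, setting $\bar\theta=(\theta_a+\theta_b)/2$ and $\lambda(\theta)=\frac{d}{d\theta}\Phi(e^{i\theta})\cdot ie^{i\theta}$ (so that $\|\lambda\|_{C^\alpha}\leq C\|\Phi\|_{C^{1,\alpha}}$),
\begin{equation*}
\Phi(a)-\Phi(b) = \lambda(\bar\theta)(a-b) + \mathcal{R}(a,b),
\end{equation*}
where $\mathcal{R}$ has magnitude $O(|a-b|^{1+\alpha})$ in the chord direction $ie^{i\bar\theta}$ but only $O(|a-b|^{2+\alpha})$ in the normal direction $e^{i\bar\theta}$, by the odd-moment cancellation $\int_{-h}^{h}\sin u\,du=0$ combined with the $C^\alpha$ regularity of $\lambda$. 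The second key input is the algebraic identity $\iint\nabla\rho_\epsilon(y)\rho_\epsilon(z)\cdot\lambda_0(m(x-y)-m(x-z))\,dy\,dz=\lambda_0\,\dv m_\epsilon(x)=0$ for any constant $\lambda_0$; choosing $\lambda_0=\lambda(m(x))$ kills the leading tangential term. The normal-direction contribution of $\mathcal{R}$ then integrates to the desired bound via $\|m(\cdot-y)-m(\cdot-z)\|_{L^{2+\alpha}}^{2+\alpha}\leq|y-z|\|m\|_{B^{1/(2+\alpha)}_{2+\alpha,\infty}}^{2+\alpha}$, $|y-z|\leq 2\epsilon$ on the support, and $\|\nabla\rho_\epsilon\|_{L^1}\leq C/\epsilon$, with all $\epsilon$-factors cancelling.

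\textbf{The main obstacle} is controlling the residual tangential contributions: the $O(|a-b|^{1+\alpha})$ chord remainder of $\mathcal{R}$, and the $\lambda$-variation term $[\lambda(\bar\theta(y,z))-\lambda(m(x))](a-b)$. Estimated brutally by absolute values these yield an $\epsilon^{-1/(2+\alpha)}$-divergent $L^1$ contribution, so a sharper estimate must exploit an additional cancellation beyond the first-order identity $\dv m_\epsilon=0$. I expect this to come from antisymmetrizing the double integral via the kernel $\nabla\rho_\epsilon(y)\rho_\epsilon(z)-\nabla\rho_\epsilon(z)\rho_\epsilon(y)$ (which for a Gaussian $\rho$ reduces to $-(y-z)\epsilon^{-2}\rho_\epsilon(y)\rho_\epsilon(z)$), so that the quadratic form $(y-z)\cdot(m(x-y)-m(x-z))$ probes the symmetric part of $\nabla m$ whose trace vanishes by $\dv m=0$; combined with the Hölder modulus of $\lambda$, this should furnish the missing $\epsilon^{1/(2+\alpha)}$ factor needed to upgrade the estimate to the $(2+\alpha)$-exponent bound. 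Once the pointwise bound is established, passage to the limit and the supremum over entropies follow from the common dominating function via weak-$*$ compactness.
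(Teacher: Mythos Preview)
Your plan has a genuine gap precisely at what you label ``the main obstacle'': the tangential residual terms of order $|a-b|^{1+\alpha}$ are not actually controlled, and the antisymmetrization you \emph{expect} to save the day does not. After antisymmetrizing with a Gaussian kernel you obtain the factor $(y-z)\cdot(m(x-y)-m(x-z))$, but your claim that this ``probes the symmetric part of $\nabla m$ whose trace vanishes'' implicitly relies on a second-order Taylor expansion $m(x-y)-m(x-z)\approx -\nabla m(x)(y-z)$, which is unavailable since $m$ has only the low fractional regularity $B^{1/(2+\alpha)}_{2+\alpha,\infty}$. Even if one could make sense of this expansion, the trace-free part of a $2\times 2$ matrix integrates to zero against a \emph{radial} kernel, yet your integrand carries the additional factor $[\lambda(\bar\theta(y,z))-\lambda(\theta_x)]$ which depends on $y,z$ through $m$ and destroys the radial symmetry. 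Bounding this factor by its H\"older modulus $(|D^{-y}m|+|D^{-z}m|)^\alpha$ and the remaining factor by $|y-z|\,(|D^{-y}m|+|D^{-z}m|)$ reproduces exactly the $\e^{-1/(2+\alpha)}$-divergent contribution you were trying to avoid. So the argument, as written, does not close.

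The paper takes a different route that bypasses this pointwise obstacle. Instead of expanding $\Phi(a)-\Phi(b)$, it extends $\Phi$ radially to $\widehat\Phi$ and uses the constraint $|m|=1$ to rewrite
\[
\dv\widehat\Phi(m_\e)=\Psi(m_\e)\cdot\nabla w_\e,\qquad w_\e=1-|m_\e|^2,
\]
where $\|\Psi\|_{C^{0,\alpha}}\lesssim\|\Phi\|_{C^{1,\alpha}}$. The key step is a Besov-space duality estimate $\bigl|\int f\,\nabla g\bigr|\lesssim \|f\|_{B^\alpha_{p,\infty}}\|g\|_{B^{1-\alpha}_{q,1}}$ with $p=(2+\alpha)/\alpha$ and $q=(2+\alpha)/2$: the $C^{0,\alpha}$ regularity of $\Psi$ controls the first factor, while commutator estimates for $w_\e$ (of Constantin--E--Titi type, quadratic in finite differences of $m$) control the second. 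This duality is exactly what converts the H\"older-$\alpha$ modulus of $\lambda$ into the missing gain; a purely pointwise bound $|\dv\widehat\Phi(m_\e)(x)|\le G_\e(x)$ as you propose cannot exploit it and is not what the paper achieves.
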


Proposition~\ref{p:ent_prod} can be interpreted
 as an interpolation between the estimates
\begin{align*}
\bigg(
\bigvee_{\|\Phi\|_{C^{1,1}} \leq 1} |\dv \Phi(m)|\bigg)(\Omega) 
&
\lesssim
\|m\|^{3}_{B^{1/3}_{3,\infty}(\Omega)}\,,
\\
\bigg(\bigvee_{\|\Phi\|_{C^{0,1}} \leq 1} |\dv \Phi(m)|\bigg)(\Omega) &
\lesssim  \|m\|^{2}_{B^{1/2}_{2,\infty}(\Omega)}\,.
\end{align*}
The first of these estimates is proved in \cite[Proposition~3.10]{GL20},
and the second can be established using similar calculations
which rely on commutator estimates for the function
\begin{align*}
w_\e = 1-|m_\e|^2 =(|m|^2)_\e -|m_\e|^2\,,
\end{align*}
where the subscript $\e$ denotes convolution at scale $\e$.
(In the context of the eikonal equation, arguments based on commutator estimates were introduced in \cite{DeI}.)
The interpolation argument is however a bit involved.
In particular, the constant $C=C(\alpha)$ we are able to obtain in Proposition~\ref{p:ent_prod} blows up as $\alpha\to 0$ or $1$, even though these borderline cases are easier to handle.

The commutator estimates we use in the proof of Proposition~\ref{p:ent_prod} are as follows.

\begin{lem}\label{l:commut}
Let $\Omega\subset\R^2$ an open set and $m\colon\Omega\to\mathbb S^1$.
Let $\rho\in C_c^1(B_1)$, $\rho\geq 0$, $\int\rho=1$, $|\nabla\rho|\leq 2$, and define $m_\e=m*\rho_\e$ for $\rho_\e(x)=\e^{-2}\rho(x/\e)$ and $\e>0$.
Then the commutator 
\begin{align*}
w_\e =1-|m_\e|^2 = (|m|^2 )_\e -|m_\e|^2\,,
\end{align*}
satisfies
\begin{align*}
|w_\e|(x)
&
\lesssim
\Xint{-}_{B_{\e}}|D^h m(x)|^2 
\, dh\,,
\\
|D^{\e k} w_\e|(x)
&
\lesssim |k| 
\Xint{-}_{B_{2\e}}
|D^h m(x)|^2
\, dh\quad \text{for }|k|\leq 1\,,
\end{align*}
for all $x\in\Omega$ such that $B_{2\e}(x)\subset\Omega$.
\end{lem}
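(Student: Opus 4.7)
The starting point for both bounds is the symmetric commutator identity
\begin{align*}
w_\e(x) = \frac12 \iint \rho_\e(x-y)\,\rho_\e(x-z)\,|m(y)-m(z)|^2\,dy\,dz\,,
\end{align*}
which holds because $|m|^2\equiv 1$ almost everywhere forces $(|m|^2)_\e\equiv 1$, so $w_\e$ equals the standard expansion of $\langle m,m\rangle_\e - \langle m_\e,m_\e\rangle$ into a symmetric difference. This identity will replace the $\mathbb S^1$ constraint throughout the argument; beyond it, only elementary properties of $\rho_\e$ will be used.

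For the first estimate, I would insert the triangle inequality $|m(y)-m(z)|^2 \leq 2|m(y)-m(x)|^2 + 2|m(z)-m(x)|^2$ into the double integral. Since $\int\rho_\e = 1$, integrating out the inactive variable in each of the two resulting terms collapses the expression to $2\int \rho_\e(x-y)|m(y)-m(x)|^2\,dy$. Bounding $\rho_\e \leq C\e^{-2}\mathbf 1_{B_\e(x)}$ and changing variables $h = y - x$ then yields $|w_\e(x)| \leq C\e^{-2}\int_{B_\e}|D^h m(x)|^2\,dh$, which is the claimed $\dashint_{B_\e}|D^h m(x)|^2\,dh$.

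For the second estimate, I would apply the same identity at $x+\e k$ and at $x$, subtract, and split the difference of the two products of mollifiers by the discrete Leibniz rule
\begin{align*}
& \rho_\e(x+\e k - y)\rho_\e(x+\e k - z) - \rho_\e(x - y)\rho_\e(x - z) \\
&\quad = \bigl[\rho_\e(x+\e k - y) - \rho_\e(x - y)\bigr]\rho_\e(x+\e k - z) + \rho_\e(x - y)\bigl[\rho_\e(x+\e k - z) - \rho_\e(x - z)\bigr].
\end{align*}
By the mean value theorem, the hypothesis $|\nabla\rho|\leq 2$, and $|k|\leq 1$, each difference of $\rho_\e$ is controlled by $2|k|\e^{-2}\mathbf 1_{B_{2\e}(x)}$, while each undifferentiated $\rho_\e$ is bounded by $C\e^{-2}\mathbf 1_{B_{2\e}(x)}$ on the shifted support. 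Applying the same triangle inequality as above and integrating out the inactive variable over $B_{2\e}(x)$ (which contributes a factor $\e^2$) yields the announced bound $|k|\,\dashint_{B_{2\e}}|D^h m(x)|^2\,dh$.

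The proof is primarily bookkeeping, so the only mildly delicate step is the second one: tracking the enlarged support $B_{2\e}(x)$ that arises from the shift $\e k$ with $|k|\leq 1$, which is precisely the reason that the second bound averages over $B_{2\e}$ rather than over $B_\e$.
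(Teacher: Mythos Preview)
Your proof is correct and follows essentially the same approach as the paper. The paper starts from the equivalent identity $w_\e(x)=\int_{B_\e}|m(x-y)-m_\e(x)|^2\rho_\e(y)\,dy$ rather than your symmetrized double integral, but the two coincide (expand $m_\e(x)$ inside the square and use $|m|=1$); for the second estimate the paper likewise applies $D^{\e k}$ to its identity and performs the same support-and-gradient bookkeeping, only organized slightly differently.
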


Here and in what follows we denote by $D^h$ the finite difference operator
\begin{align*}
D^h m =m^h - m,\quad m^h =m(\cdot +h)\quad\text{for }h\in\R^2\,.
\end{align*}

\begin{proof}[Proof of Lemma~\ref{l:commut}]
As in \cite{DeI}, these commutator estimates come from the commutator identity
\begin{align*}
w_\e(x)
&
= 1-|m_\e(x)|^2
=\int_{B_\e} |m(x- y)-m_\e(x)|^2\, \rho_\e(y)\,dy
\\
&
=\int_{\Omega} \bigg| \int_{\Omega} \big(m(y)-m( z)\big)
\rho_\e(x-z)\,dz
\bigg|^2\rho_\e(x-y)\,dy\,,
\end{align*}
which follows by integrating
\begin{align*}
1-|m_\e(x)|^2
&
=|m(x- y)|^2 -|m_\e(x)|^2
\\
&
=|m(x- y)-m_\e(x)|^2 + 2 \langle m_\e(x),m(x- y)-m_\e(x)\rangle\,,
\end{align*}
with respect to $\rho_\e(y)\, dy$.
The commutator identity directly implies the first estimate.
Applying the finite difference operator,
 it also
 implies the identity
\begin{align*}
D^{\e k}w_\e(x)
&
=
\int_{\Omega} \bigg| \int_{\Omega} \big(m(y)-m( z)\big)
\rho_\e^{\e k}(x-z)\,dz
\bigg|^2D^{\e k}\rho_\e(x-y)\,dy
\\
&
\quad
+ \int_{\Omega}
\Big\langle \int_{\Omega}\big(m(y)-m( z')\big)
(\rho_\e+\rho_\e^{\e k})(x-z')\,dz'
,
\\
&
\hspace{5em}
\int_{\Omega}\big(m(y)-m( z)\big)
D^{\e k}\rho_\e(x-z)\,dz
\Big\rangle
\,\rho_\e(x-y)\, dy\,
\\
&
=
\int_{B_2} \bigg| \int_{B_2} \big(m(x-\e y)-m( x-\e z)\big)
\rho^{k}(z)\,dz
\bigg|^2D^{k}\rho(y)\,dy
\\
&
\quad
+ \int_{B_2}
\Big\langle \int_{\Omega}\big(m(x-\e y)-m( x-\e z')\big)
(\rho +\rho^{ k})(z')\,dz'
,
\\
&
\hspace{5em}
\int_{B_2}\big(m(x-\e y)-m(x-\e z)\big)
D^{k}\rho( z)\,dz
\Big\rangle
\,\rho ( y)\, dy\,,
\end{align*}
which then provides the second estimate.
\end{proof}

We will also use the estimate
\begin{align}\label{eq:estim_grad_meps}
|\nabla m_\e|(x)
&
\lesssim \frac{1}{\e}\Xint{-}_{B_\e} |D^h m(x)|\, dh\,.
\end{align}
which follows from the identity
\begin{align*}
\nabla m_\e(x)
&
=\frac{1}{\e}\int_{B_1}(m(x-\e y)-m(x))\nabla\rho(y)\, dy\,.
\end{align*}

\begin{proof}[Proof of Proposition~\ref{p:ent_prod}]
Let $\Phi\in \mathrm{ENT}$ such that $\|\Phi\|_{C^{1,\alpha}}\leq 1$,
and $\widehat\Phi$ a radial extension $\widehat\Phi(re^{i\theta})=\eta(r)\Phi(e^{i\theta})$ for some $\eta\in C_c^2(0,\infty)$ with $\eta(1)=1$.
Then we have
\begin{align*}
\dv\widehat\Phi(m_\e)
&
=\Psi(m_\e)\cdot\nabla w_\e\,,
\quad w_\e =1-|m_\e|^2\,,
\\
\Psi(re^{i\theta})
&
=
\frac{1}{2r^2}\eta(r)\lambda(e^{i\theta})\, e^{i\theta}
-\frac 1{2r}\eta'(r)\Phi(e^{i\theta})
\,,
\quad \lambda=\partial_\theta\Phi\cdot (ie^{i\theta})\,,
\end{align*}
and
$\|\Psi\|_{C^{0,\alpha}}\lesssim 1$. 

\medskip

Let $U\subset\Omega$ an open subset and $\zeta\in C_c^1(U)$.
We fix an intermediate open set $\Omega'$ and $\delta\in (0,1)$ such that
\begin{align*}
\supp(\zeta) +B_{4\delta}\subset\Omega'
\subset\Omega'+B_{2\delta}\subset U\,,
\end{align*}
and a cut-off function  $\chi\in C_c^1(\Omega)$ such that
\begin{align*}
\mathbf 1_{\supp(\zeta)}\leq \chi\leq 1
\quad
\text{and } \supp(\chi)+B_{2\delta}\subset\Omega'\,.
\end{align*}
Then, for $0<\e<\delta$, we write
\begin{align}\label{eq:dvPhimeps_chi}
&
\langle\dv \widehat\Phi(m_\e),\zeta\rangle
+
\int_{\R^2}w_\e \chi\Psi(m_\e)\cdot\nabla\zeta \, dx
=\int_{\R^2}\chi\Psi(m_\e)\cdot\nabla(\zeta w_\e ) \, dx
\,.
\end{align}
Using the Littlewood-Paley characterization of Besov spaces and Hölder's inequality (see Lemma~\ref{l:dual_besov})  we find 
\begin{align}
\label{eq:estim_LP}
\bigg|
\int_{\R^2}\chi\Psi(m_\e)\cdot\nabla(\zeta w_\e) \, dx
\bigg|
&
\lesssim
\|\chi\Psi(m_\e)\|_{B^{\alpha}_{p,\infty}}  \|\zeta w_\e\|_{B^{1-\alpha}_{q,1}}\,,
\end{align}
where we choose $p=(2+\alpha)/\alpha$, hence $q=(2+\alpha)/2$.
To estimate the two Besov norms in the right-hand side,
 we come back to their finite difference characterization.
We have
\begin{align*}
\|D^h[\chi \Psi(m_\e)]\|_{L^p}
&
=\|(D^h\chi) \Psi(m_\e^h) +\chi D^h [\Psi(m_\e)]\|_{L^p(\Omega' \cap (\Omega'-h))}
\\
&
\quad
+\|\chi \Psi(m_\e)\|_{L^p(\supp(\chi)\setminus (\Omega'-h))}
\\
&
\quad
+\|\chi  \Psi(m_\e) \|_{L^p( \supp(\chi)\setminus(\Omega'+h))}\,.
\end{align*}
For $0<|h|<\delta$, the last two terms are zero and we deduce
\begin{align*}
\frac{1}{|h|^\alpha}
\|D^h[\chi \Psi(m_\e)]\|_{L^p}
&
\lesssim |\Omega|^{\frac 1p}\|\chi\|_{C^1}
\|\Psi\|_\infty
+\frac{1}{|h|^\alpha}\|\chi D^h [\Psi(m_\e)]\|_{L^p(\Omega' \cap (\Omega'-h))}
\\
&
\lesssim |\Omega|^{\frac 1p} \|\chi\|_{C^1}\|\Psi\|_\infty
+\frac{1}{|h|^\alpha}\|\Psi\|_{C^{0,\alpha}}
\| D^h m_\e \|^\alpha_{L^{\alpha p}(\Omega' \cap (\Omega'-h))}
\\
&
\lesssim 
|\Omega|^{\frac 1p} \|\chi\|_{C^1}\|\Psi\|_\infty
+
\|\nabla m_\e \|^\alpha_{L^{\alpha p}(\Omega'+B_\delta)}\,.
\end{align*}
To estimate the last factor we use \eqref{eq:estim_grad_meps} 
which implies
\begin{align*}
\|\nabla m_\e\|_{L^{\alpha p}(\Omega'+B_\delta)}
&
\lesssim
\frac 1\e
\Xint{-}_{B_\e}
\|D^h m\|_{L^{\alpha p}(\Omega'+B_\delta)} \, dh\,.
\end{align*}
Plugging this into the previous estimate for $0<|h|<\delta$
and
recalling that $p=(2+\alpha)/\alpha$
we obtain
\begin{align}
&
\|\chi\Psi(m_\e)\|_{B^{\alpha}_{p,\infty}}
\lesssim
\|\chi\Psi(m_\e)\|_{L^p} +
\sup_{|h|>0} \frac{\|D^h[\chi\Psi(m_\e)]\|_{L^p}}{|h|^\alpha}
\nonumber
\\
&
\lesssim |\Omega|^{\frac {\alpha}{2+\alpha}}\big(\|\chi\|_{C^1} +\delta^{-\alpha} \big)
\|\Psi\|_\infty
+\frac{1}{\e^\alpha}
\Xint{-}_{B_\e}
\|D^h m\|^\alpha_{L^{2+\alpha}(\Omega'+B_\delta)} \, dh\,.
\label{eq:chiPsiBalphapinfty}
\end{align}
Now we turn to estimating the second factor in the right-hand side of \eqref{eq:estim_LP}.
As above, we have
\begin{align*}
\|D^h(\zeta w_\e)\|_{L^q}
&
=\|(D^h\zeta) w_\e^h +\zeta D^h w_\e\|_{L^q(\Omega' \cap (\Omega'-h))}
\\
&
\quad
+\|\zeta w_\e\|_{L^q(\supp(\zeta)\setminus (\Omega'-h))}
+\|\zeta  w_\e \|_{L^q( \supp(\zeta)\setminus(\Omega'+h))}\,.
\end{align*}
Since $0<\e<\delta$, the last two terms are zero if $|h|\leq \e$.
We deduce
\begin{align*}
\|\zeta w_\e\|_{B^{1-\alpha}_{q,1}}
&
\lesssim 
\|\zeta w_\e\|_{L^q}
+
\int_{\R^2} \frac{\|D^h(\zeta w_\e)\|_{L^q}}
{|h|^{1-\alpha}}
\frac{dh}{|h|^2}
\\
&
\lesssim 
 \|\zeta\|_{C^1} \|w_\e\|_{L^q(\Omega')}
+\|\zeta\|_\infty \int_{|h|\geq\e} \frac{\| w_\e\|_{L^q(\supp(\zeta))}}{|h|^{1-\alpha}}\frac{dh}{|h|^2}
\\
&
\quad
+\|\zeta\|_\infty \int_{|h|\leq\e} \frac{\|D^hw_\e\|_{L^q(\supp(\zeta))}}{|h|^{1-\alpha}}\frac{dh}{|h|^2}
\\
&
\lesssim 
\|\zeta\|_{C^1} \|w_\e\|_{L^q(\Omega')}
+\frac{\|\zeta\|_\infty}{1-\alpha}  \frac{\| w_\e\|_{L^q(\supp(\zeta))}}{\e^{1-\alpha}}
\\
&
\quad
+\frac{ \|\zeta\|_\infty}{\e^{1-\alpha}} \int_{|k|\leq 1} \frac{\|D^{\e k } w_\e\|_{L^q(\supp(\zeta))}}{|k|^{1-\alpha}}\frac{dk}{|k|^2}\,.
\end{align*}
Recalling that $q=(2+\alpha)/2$ and
using the commutator estimates of Lemma~\ref{l:commut} we 
infer
\begin{align*}
\|\zeta w_\e\|_{B^{1-\alpha}_{q,1}}
&
\lesssim \|\zeta\|_{C^1}
\sup_{|h|\leq\e}
 \|D^h m\|^2_{L^{2+\alpha}(\Omega')}
 \\
 &
 \quad
+\frac{\|\zeta\|_\infty}{1-\alpha}  \frac{1}{\e^{1-\alpha}}
\Xint{-}_{B_\e}
 \|D^h m\|^2_{L^{2+\alpha}(\Omega')}\, dh
\\
&
\quad
+\frac{ \|\zeta\|_\infty}{\e^{1-\alpha}}\frac{1}{\alpha} 
\Xint{-}_{B_{2\e}}
 \|D^h m\|^2_{L^{2+\alpha}(\Omega')}
 dh
 \\
 &
 \lesssim \e^{\frac{2}{2+\alpha}} \|\zeta\|_{C^1} |m|^2_{B^{1/(2+\alpha)}_{2+\alpha,\infty}}
 \\
 &
 \quad
 +
 \frac{\|\zeta\|_\infty}{\alpha(1-\alpha)}
\frac{1}{\e^{1-\alpha}}
  \Xint{-}_{B_{2\e}}
 \|D^h m\|^2_{L^{2+\alpha}(\Omega')} dh\,.
\end{align*}
Using this and \eqref{eq:chiPsiBalphapinfty} 
to estimate the right-hand side of \eqref{eq:estim_LP},
 noting that the second term in the left-hand side of \eqref{eq:dvPhimeps_chi} converges to 0 as $\e\to 0$ 
(since $w_\e\to 0$ in $L^1$),
and also that 
\begin{align*}
\frac{1}{\e^{1-\alpha}}\sup_{|h|<2\e}\|D^h m\|^2_{L^{2+\alpha}} \lesssim \e^{\frac{2}{2+\alpha}+\alpha -1}|m|^2_{B^{1/(2+\alpha)}_{2+\alpha,\infty}}\to 0\,,
\end{align*}
 we deduce
\begin{align*}
&
\big|
\langle \dv\widehat\Phi(m_\e),\zeta\rangle
\big| -o(1)
\\
&
\lesssim
\frac{\|\zeta\|_\infty}{\alpha(1-\alpha)}
\frac{1}{\e}
  \Xint{-}_{B_{2\e}}
 \|D^h m\|^2_{L^{2+\alpha}(U)} dh
 \,
   \Xint{-}_{B_{\e}}
 \|D^k m\|^\alpha_{L^{2+\alpha}(U)} dk\,,
\end{align*}
where $o(1)\to 0$ as $\e\to 0$.
Using Young's inequality $ab\leq a^p/p +b^q/q$ for $a,b\geq 0$, 
as well as Jensen's inequality,
this implies
\begin{align*}
&
\big|
\langle \dv\widehat\Phi(m_\e),\zeta\rangle
\big| 
\lesssim
\frac{\|\zeta\|_\infty}{\alpha(1-\alpha)}
\frac{1}{\e}
  \Xint{-}_{B_{2\e}}
\int_{U} |D^h m(x)|^{2+\alpha}dx\, dh
+o(1)\,.
\end{align*}
This is valid for any $\Phi\in\mathrm{ENT}$ with $\|\Phi\|_{C^{1,\alpha}}\leq 1$, any open $U\subset\Omega$ and any $\zeta\in C_c^1(U)$. 
Letting $\e\to 0$ and taking the supremum over functions $\zeta$ with $\|\zeta\|_\infty\leq 1$, we deduce
\begin{align*}
|\dv\Phi(m)|(U)\lesssim
\frac{1}{\alpha(1-\alpha)}
\liminf_{\e\to 0}
\frac{1}{\e}
  \Xint{-}_{B_{2\e}}
\int_{U} |D^h m(x)|^{2+\alpha}dx\, dh\,.
\end{align*}
Applying this to any finite collection $\Phi_1,\ldots,\Phi_N\in\mathrm{ENT}$ with $\|\Phi_j\|_{C^{1,\alpha}}\leq 1$,
any disjoint collection of open subsets 
$U_1,\ldots,U_N\subset V\subset\subset\Omega$, 
we deduce that
\begin{align*}
\sum_{j=1}^N |\dv\Phi(m)|(U_j) 
&
\lesssim \frac{1}{\alpha(1-\alpha)}
\liminf_{\e\to 0}
\frac{1}{\e}
  \Xint{-}_{B_{2\e}}
\int_{V} |D^h m(x)|^{2+\alpha}dx\, dh
\\
&
\lesssim \frac{1}{\alpha(1-\alpha)}\|m\|^{2+\alpha}_{B^{1/(2+\alpha)}_{2+\alpha,\infty}(\Omega)}\,.
\end{align*}
The desired estimate on the supremum measure follows thanks to the inner regularity of Radon measures.
\end{proof}

\begin{lem}\label{l:dual_besov}
For any $p,p'\in [1,\infty]$ such that $1/p+1/p'=1$
and any $\alpha\in (0,1)$ we have
\begin{align*}
\bigg|
\int_{\R^2}f\nabla g\, dx 
\bigg|
\lesssim
\|f\|_{B^{\alpha}_{p,\infty}}\|g\|_{B^{1-\alpha}_{p',1}}\,,
\end{align*}
for all $(f,g)\in B^{\alpha}_{p,\infty}(\R^2)\times B^{1-\alpha}_{p',1}(\R^2)$.
\end{lem}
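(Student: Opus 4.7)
The plan is to use a Littlewood--Paley decomposition and exploit the near-diagonal structure of the pairing in frequency. Let $\{\Delta_j\}_{j\geq -1}$ denote a standard inhomogeneous Littlewood--Paley partition of unity on $\R^2$, so that $\widehat{\Delta_j h}$ is supported in an annulus $\{|\xi|\sim 2^j\}$ for $j\geq 0$ and in a fixed ball for $j=-1$. The Besov norms admit the equivalent characterizations
\begin{align*}
\|f\|_{B^\alpha_{p,\infty}} \sim \sup_{j\geq -1}2^{j\alpha}\|\Delta_j f\|_{L^p}, \qquad
\|g\|_{B^{1-\alpha}_{p',1}} \sim \sum_{j\geq -1}2^{j(1-\alpha)}\|\Delta_j g\|_{L^{p'}}.
\end{align*}

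First I expand the pairing as
\begin{align*}
\int_{\R^2}f\,\nabla g\,dx = \sum_{j,k\geq -1}\int_{\R^2}\Delta_j f\cdot\nabla\Delta_k g\,dx,
\end{align*}
and observe, via Plancherel, that each summand vanishes unless the Fourier supports of $\Delta_j f$ and $\nabla\Delta_k g$ overlap, which forces $|j-k|\leq C_0$ for an absolute constant $C_0$. For each surviving pair, Hölder combined with Bernstein's inequality $\|\nabla\Delta_k g\|_{L^{p'}}\lesssim 2^k\|\Delta_k g\|_{L^{p'}}$ (with $2^k$ replaced by $O(1)$ for $k=-1$) yields
\begin{align*}
\bigg|\int_{\R^2}\Delta_j f\cdot\nabla\Delta_k g\,dx\bigg| \lesssim 2^k\|\Delta_j f\|_{L^p}\|\Delta_k g\|_{L^{p'}}.
\end{align*}
Splitting the weight $2^k\sim 2^{j\alpha}\cdot 2^{k(1-\alpha)}$ (using $|j-k|\leq C_0$), and summing first over $j$ against the supremum,
\begin{align*}
\sum_{|j-k|\leq C_0} 2^{j\alpha}\|\Delta_j f\|_{L^p}\cdot 2^{k(1-\alpha)}\|\Delta_k g\|_{L^{p'}}
\lesssim \Bigl(\sup_j 2^{j\alpha}\|\Delta_j f\|_{L^p}\Bigr)\sum_k 2^{k(1-\alpha)}\|\Delta_k g\|_{L^{p'}},
\end{align*}
which is exactly the required product of Besov norms.

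The only technical obstacle I anticipate is the low-frequency bookkeeping at $j=-1$ or $k=-1$, where the Bernstein factor degrades to $O(1)$ rather than $2^{-1}$, and the dyadic weights $2^{j\alpha}$, $2^{k(1-\alpha)}$ are $O(1)$ as well. Only finitely many such terms arise, and they are easily controlled by products of the form $\|\Delta_{-1}f\|_{L^p}\|g\|_{L^{p'}}$ or $\|f\|_{L^p}\|\Delta_{-1}g\|_{L^{p'}}$, which are in turn absorbed using the embeddings $B^\alpha_{p,\infty}\hookrightarrow L^p$ and $B^{1-\alpha}_{p',1}\hookrightarrow L^{p'}$ (both valid since $\alpha\in(0,1)$, via geometric summation of the dyadic weights). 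Once this low-frequency caveat is dispatched, the heart of the argument is simply the splitting of the single derivative between the two Besov exponents, enabled by frequency localization.
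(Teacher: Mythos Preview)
Your proof is correct and follows essentially the same route as the paper's: a Littlewood--Paley decomposition, the near-diagonal reduction from disjoint Fourier supports, the Bernstein/Fourier-multiplier estimate $\|\nabla\Delta_k g\|_{L^{p'}}\lesssim 2^k\|\Delta_k g\|_{L^{p'}}$, and the splitting $2^k=2^{\alpha k}2^{(1-\alpha)k}$ followed by the $\ell^\infty$--$\ell^1$ pairing. The only differences are notational (the paper indexes from $j\ge 0$ and invokes a Fourier multiplier theorem rather than naming Bernstein), and your low-frequency discussion is slightly more explicit than the paper's.
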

\begin{proof}[Proof of Lemma~\ref{l:dual_besov}]
This inequality essentially amounts to the inclusion of $B^{-\alpha}_{p',1}$ into the dual of $B^{\alpha}_{p,\infty}$.
The elementary proof  is basically contained in \cite[\S~2.11]{triebel83}, 
where the dual of $B^{\alpha}_{p,q}$ is shown to be equal to $B^{-\alpha}_{p',q'}$ for all $p,q\in (1,\infty)$.
Here we have $q=\infty$ and only one inclusion is true, 
that is why this statement is not stated explicitly there.
We provide the proof for the readers' convenience.

It relies on the Littlewood-Paley characterization of Besov spaces, which we start by recalling.
We fix a smooth partition of unity $\lbrace\chi_j\rbrace_{j\geq 0}\subset C_c^\infty(\R^2)$ with the properties that
\begin{align*}
&
\sum_{j=0}^\infty \chi_j(x)=1,\\
&  |\chi_0(\xi)|\leq \mathbf 1_{|\xi|\leq 2}\,,
\\
&
|\chi_j(\xi)|\leq\mathbf 1_{2^{-j-1}\leq |\xi|\leq 2^{j+1}}
\text{ for }j\geq 1\,,
\\
& \sup_{j\geq 0} 2^{jk}\sup_{\R^2}|\nabla^k\chi_j| <\infty\quad\forall k\geq 0\,.
\end{align*}
Then, for any $\gamma\in\R$ and $p,q\in [1,\infty]$, the Besov space $B^{\gamma}_{p,q}(\R^2)$ consists of all tempered distributions $\varphi \in\mathcal S'(\R^2)$ such that the norm
\begin{align*}
\|\varphi \|_{B^{\gamma}_{p,q}} =\big\| \big(2^{j\gamma}\|\mathcal F^{-1}\chi_j\mathcal F \varphi\|_{L^p}\big)_{j\geq 0}\big\|_{\ell^q}\,,
\end{align*}
is finite  \cite[\S~2.3.1]{triebel83},
where $\mathcal F$ denotes the Fourier transform on $\mathcal S'(\R^2)$.
Moreover, for $\gamma\in (0,1)$, these norms (which depend on the system $\lbrace\chi_j\rbrace$) are equivalent to 
\begin{align*}
\|\varphi \|_{B^{\gamma}_{p,q}} =
\|\varphi\|_{L^p} + \big\| |h|^{-\gamma} \|D^h\varphi\|_{L^p}\big\|_{L^q(dh/h^2)}
\,,
\end{align*}
see e.g. \cite[\S{2.5.12}]{triebel83}.

To prove the claimed inequality, we use the decomposition
\begin{align*}
\varphi =\sum_{j\geq 0}\mathcal F^{-1}\chi_j\mathcal F \varphi\,,
\end{align*}
and the fact that $\chi_j\chi_k\equiv 0$ for $|j-k|\geq 2$,
to rewrite the integral as
\begin{align*}
\int_{\R^2}f\nabla g\, dx
&
=\langle f,\nabla g\rangle
=\sum_{j,k\geq 0}
\langle \mathcal F^{-1}\chi_j\mathcal F f,\mathcal F^{-1}\chi_k i\xi \mathcal F g\rangle
\\
&
=\sum_{j,k\geq 0}
\langle \chi_j\mathcal F f,\chi_k i\xi \mathcal F g\rangle
\\
&
=\sum_{r=-1}^1 \sum_{j\geq 0}
\langle \chi_{j+r}\mathcal F f,\chi_j i\xi\mathcal F g\rangle
\\
&
=\sum_{r=-1}^1 \sum_{j\geq 0}
\langle\mathcal F^{-1} \chi_{j+r}\mathcal F f,\mathcal F^{-1}\chi_j i\xi\mathcal F g\rangle\,.
\end{align*}
Recalling that $\chi_j$ is supported in $2^{j-1}\leq |\xi|\leq 2^{j+1}$ for $j\geq 2$, 
and applying Hölder's inequality, we infer
\begin{align*}
\bigg|\int_{R^2} f\nabla g\, dx\bigg|
&
\leq \sum_{r=-1}^1
\sum_{j\geq 0}
\|\mathcal F^{-1} \chi_{j+r}\mathcal F f\|_{L^p}
\|\mathcal F^{-1}\chi_j i\xi \mathcal F g\|_{L^{p'}}
\\
&
\lesssim
\sum_{r=-1}^1
\sum_{j\geq 0}
2^j\|\mathcal F^{-1} \chi_{j+r}\mathcal F f\|_{L^p}
\|\mathcal F^{-1}\chi_j  \mathcal F g\|_{L^{p'}}\,.
\end{align*}
The last inequality follows
 from the properties of $\lbrace\chi_j\rbrace$ and 
a Fourier multiplier theorem (see e.g. \cite[\S{1.5}]{triebel83}).
Writing $2^j=2^{\alpha j} 2^{(1-\alpha)j}$, we deduce
\begin{align*}
\bigg|\int_{R^2} f\nabla g\, dx\bigg|
&
\lesssim
\sup_{k\geq 0}
2^{\alpha k}\|\mathcal F^{-1} \chi_{k}\mathcal F f\|_{L^p}
\sum_{j\geq 0}
2^{(1-\alpha)j}\|\mathcal F^{-1}\chi_j  \mathcal F g\|_{L^{p'}}\,,
\end{align*}
which corresponds to the claimed inequality.
\end{proof}

\section{Kinetic formulation and Lagrangian representation}

In this section we recall the notions of kinetic formulation and of Lagrangian representation introduced in \cite{jabinperthame01,GL20} and \cite{marconi21ellipse} respectively. 
We prove moreover some properties which relate the traces on the jump set $J_m$ to the traces of Lagrangian trajectories.

\begin{thm}\label{t:kin}
Let $m \in B^{\frac13}_{3,\infty}(\Omega)$ be a solution of \eqref{eq:eik}. Then there is $\sigma \in \mathcal M(\Omega \times \T)$ such that 
\begin{equation}\label{eq:kin}
e^{is}\cdot \nabla_x \chi = \partial_s \sigma,
\end{equation}
where 
\[
\chi(x,s)= 
\begin{cases}
1 & \mbox{if }e^{is}\cdot m(x)>0 , \\
0 & \mbox{otherwise}.
\end{cases}
\]
Among the measures $\sigma$ satisfying \eqref{eq:kin}, there is a unique $\sigma_{\min}$ minimizing $\|\sigma\|$ and it has the following structure: 
\[
\sigma_{\min} = \nu_{\min} \otimes (\sigma_{\min})_x, \qquad \mbox{where} \quad \nu_{\min}= (p_x)_\sharp |\sigma_{\min}|
\]
and for $\nu_{\min}$-a.e. $x \in \Omega \setminus J_m$ it holds
\begin{equation}\label{e:dis-cont}
(\sigma_{\min})_x = \pm \frac12 \left( \delta_{\mathfrak s(x)}+ \delta_{\mathfrak s(x)+ \pi}\right)
\end{equation}
for some $\mathfrak s: \Omega \to \T$ uniquely defined $\nu_{\min}\llcorner J_m^c$-a.e., and for $\nu_{\min}$-a.e. $x \in J_m$ it holds
\[
(\sigma_{\min})_x = \mathbf{n}\cdot e^{i\bar s} \bar g_\beta(s-\bar s)\L^1,
\]
where ${\bf n}(x)$ is the normal to $J_m$ at $x$, the values $ \bar s\in \T$, $\beta \in (0,\frac{\pi}2)$ are uniquely determined by 
\[
m^+(x)= e^{i(\bar s + \beta)}, \quad m^-(x)= e^{i(\bar s - \beta)}
\]
and $\bar g_\beta$ is a Lipschitz function.
If moreover $\beta \in (0,\frac\pi4)$, then $\bar g_\beta$ is supported on $[ - \frac\pi2 - \beta, - \frac\pi2 + \beta] \cup [ \frac\pi2 - \beta, \frac\pi2 + \beta]$ and is non-negative.
\end{thm}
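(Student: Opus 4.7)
The plan is to combine the kinetic formulation from \cite{jabinperthame01,GL20} with the Lagrangian representation of \cite{marconi21ellipse}, closing with a direct computation at the jump. Since the theorem is essentially a repackaging of those works, my emphasis is on how the pieces assemble.

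For the existence of $\sigma$, the natural pairing
\[
\Phi_\phi(e^{i\theta})=\int_\T \phi'(s)\,e^{is}\,\chi(e^{i\theta},s)\,ds
\]
gives a continuous map $\phi\mapsto \Phi_\phi$ from $C^2(\T)$ into $\mathrm{ENT}$, with $\|\Phi_\phi\|_{C^{1,1}}\lesssim \|\phi\|_{C^2}$, and (after a short check) $\partial_\theta\Phi_\phi(e^{i\theta})\cdot e^{i\theta}=0$. The borderline ($\alpha=1$) analogue of Proposition~\ref{p:ent_prod}, proved in \cite{GL20}, then makes $\phi\mapsto \dv\Phi_\phi(m)$ a bounded linear map $C^2(\T)\to\mathcal M(\Omega)$. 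Riesz representation on $\Omega\times\T$ furnishes a Radon measure $\sigma$ which, paired against test products $\zeta(x)\phi(s)$, encodes all entropy productions simultaneously, and a direct computation verifies \eqref{eq:kin} in $\mathcal D'(\Omega\times\T)$.

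Next I would construct the minimiser. Any two solutions of \eqref{eq:kin} differ by a measure $\tau$ with $\partial_s\tau=0$, so $\tau$ disintegrates as $\tau_x\otimes \mathcal L^1_\T$ for some $\tau_x\in\mathcal M(\Omega)$. Minimising $\|\sigma\|$ is therefore a convex problem fibre by fibre, and subtracting from each slice its $s$-average strictly lowers the total variation unless that average already vanishes. Existence of $\sigma_{\min}$ follows by weak-$*$ lower semicontinuity on sublevel sets, uniqueness by the preceding observation. Disintegrating over $p_x\colon\Omega\times\T\to\Omega$ yields $\sigma_{\min}=\nu_{\min}\otimes(\sigma_{\min})_x$ with $\nu_{\min}=(p_x)_\sharp|\sigma_{\min}|$.

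For the structure off $J_m$ I would appeal to the Lagrangian representation of \cite{marconi21ellipse}: its characteristic trajectories in direction $e^{is}$ travel along level curves of the stream function $u$ (with $m=i\nabla u$), and for $\nu_{\min}$-a.e. $x\notin J_m$ a single tangent direction $\mathfrak s(x)$ (up to $\pm\pi$) carries the entire mass, producing the antipodal pair of deltas. On $J_m$, the jump part of $e^{is}\cdot\nabla_x\chi$ is
\[
\mathbf n\cdot e^{is}\bigl(\chi(m^+,s)-\chi(m^-,s)\bigr)\mathcal H^1\llcorner J_m,
\]
and integrating in $s$ with the mean-zero normalisation from the minimisation above produces the Lipschitz density $\bar g_\beta(s-\bar s)$; for $\beta<\pi/4$ the support and nonnegativity claims can be read off directly from the indicators $\chi(e^{i(\bar s\pm\beta)},\cdot)$.

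The main obstacle is the identification of $(\sigma_{\min})_x$ on $J_m^c$: excluding spurious mass on directions other than $\mathfrak s(x)$ requires the full Lagrangian description of \cite{marconi21ellipse}, which itself leans on the $\mathcal H^1$-rectifiability of $J_m$ and the $L^1$ one-sided traces from \cite{ODL03}, rather than on the kinetic equation alone. Once that structural input is in hand, the remaining pieces—including the explicit form of $\bar g_\beta$—are essentially bookkeeping.
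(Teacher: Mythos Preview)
The paper does not give its own proof of this theorem: immediately after the statement it writes ``We refer to \cite{GL20} for the kinetic formulation \eqref{eq:kin} and to \cite{marconi21ellipse} for the structure of the kinetic measure $\sigma_{\min}$, where the explicit expression of $\bar g_\beta$ is computed.'' Your outline correctly identifies these same sources, and your sketch of the existence of $\sigma$ via the entropy family $\Phi_\psi$ together with Riesz representation is indeed the mechanism behind \cite{GL20}. At the level of attribution your proposal matches the paper.

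That said, the one argument you supply beyond citation---the uniqueness of $\sigma_{\min}$---contains a genuine error. You claim that subtracting from each fibre its $s$-average strictly lowers the total variation unless that average already vanishes. This is false: for $\mu=\delta_0$ on $\T$ one has $|\delta_0 - c\,\mathcal L^1|(\T)=1+2\pi|c|$, minimized at $c=0$, not at the average $c=1/(2\pi)$. More decisively, the fibres $(\sigma_{\min})_x$ described in the very statement you are proving do \emph{not} have vanishing $s$-average: on $J_m^c$ the slice $\pm\tfrac12(\delta_{\mathfrak s(x)}+\delta_{\mathfrak s(x)+\pi})$ has total mass $\pm 1$, and on $J_m$ with $\beta<\pi/4$ the density $\bar g_\beta$ is non-negative with strictly positive integral. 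So your proposed characterization of the minimiser contradicts the conclusion, and the ``mean-zero normalisation'' you later invoke to pin down $\bar g_\beta$ is the wrong one. The uniqueness and the explicit form of $\sigma_{\min}$ in \cite{marconi21ellipse} are obtained by a direct fibrewise identification (built on the Lagrangian representation and the trace structure from \cite{ODL03}), not from a variational principle on the $s$-average.
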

We refer to \cite{GL20} for the kinetic formulation \eqref{eq:kin} and to \cite{marconi21ellipse} for the structure of the kinetic measure $\sigma_{\min}$, where the explicit expression of $\bar g_\beta$ is computed (see also \cite{LP23facto}).
We observe also that $\bar g_\beta$ approaches $\frac12\delta_{\pi/2}+ \frac12\delta_{-\pi/2}$ as $\beta \to 0$ matching \eqref{e:dis-cont}.

For future use, we recall that the kinetic measure $\sigma_{\min}$ encodes the dissipation of the entropies for a large class of entropies: to any $\psi\in C^1(\T;\R)$ we can associate the entropy
\begin{align}\label{e:Phipsi}
\Phi_\psi(z)=\int_{\T} \mathbf 1_{e^{is}\cdot z >0} \psi(s)e^{is}\, ds\,.
\end{align}
An entropy $\Phi$ belongs to the class $\widetilde{\mathrm{ENT}}$ defined in \eqref{eq:tildeENT}
 if and only if $\Phi=\Phi_\psi$ for some $\pi$-periodic $\psi \in C^1(\T;\R)$, see \cite{marconi21ellipse}.
Integrating \eqref{eq:kin} in the $s$-variable tested with $\psi(s)$, we obtain
\begin{equation}\label{e:ent-kin}
 \dv \Phi_\psi(m) = \left( - \int_\T \psi'(s) d (\sigma_{\min})_x(s) \right) \nu_{\min}
\end{equation}
We now recall the notion of Lagrangian representation.
Given $T>0$ we let
\begin{align*}
\Gamma= \Big\{ (\gamma,t^-_\gamma,t^+_\gamma)\colon
& 0\le t^-_\gamma\le t^+_\gamma\le T, \\
&
\gamma=(\gamma_x,\gamma_s)\in \BV((t^-_\gamma,t^+_\gamma);\Omega \times \R/2\pi \Z),
 \gamma_x \mbox{ is Lipschitz} \Big\}.
\end{align*}
We will always consider the right-continuous representative of the component $\gamma_s$ and we will write $\gamma(t^-_\gamma)$ instead of $ \lim_{t\to t^-_\gamma} \gamma(t)$ and  $\gamma(t^+_\gamma)$ instead of $ \lim_{t\to t^+_\gamma} \gamma(t)$.
For every $t \in (0,T)$ we consider the section 
\begin{equation*}
 \Gamma(t):= \left\{\left(\gamma,t^-_\gamma,t^+_\gamma\right)\in  \Gamma: t \in \left(t^-_\gamma,t^+_\gamma\right)\right\}
\end{equation*}
and we denote by 
\begin{align*}
\begin{array}{crl}
 e_t\colon &
  \Gamma(t) &\longrightarrow \Omega \times \R/2\pi \Z \\
&(\gamma, t^-_\gamma, t^+_\gamma)  &\longmapsto  \gamma(t),
\end{array}
\end{align*}
the evaluation map at time $t$.

\begin{definition}\label{D_Lagr}
Let $\Omega$ be a $C^{1,1}$ open set and $m$ solving \eqref{eq:eik} and \eqref{eq:kin}.
We say that a finite non-negative Radon measure $\omega \in \mathcal M( \Gamma)$ is a \emph{Lagrangian representation} of $m$ if the following conditions are satisfied:
\begin{enumerate}
\item for every $t\in (0,T)$ we have
\begin{equation}\label{E_repr_formula}
( e_t)_\sharp \left[ \omega \llcorner  \Gamma(t)\right]= \mathbf 1_{E_m}\, \L^{2}\times \L^1,
\end{equation}
where $E_m\subset\Omega\times\R/2\pi\Z$ is the `epigraph'
\begin{align*}
E_m=\left\lbrace (x,s)\in\Omega\times\R/2\pi\Z\colon m(x)\cdot e^{is}> 0\right\rbrace;
\end{align*}
\item the measure $\omega$ is concentrated on curves $(\gamma,t^-_\gamma,t^+_\gamma)\in  \Gamma$ solving the characteristic equation:
\begin{equation}\label{e:charac}
\dot\gamma_x(t)= e^{i \gamma_s(t)}\qquad\text{for a.e. }t\in (t^-_\gamma,t^+_\gamma);
\end{equation}
\item we have the integral bound
\begin{equation*}\label{E_reg}
\int_{ \Gamma} \TV_{(0,T)} \gamma_s d\omega(\gamma) <\infty;
\end{equation*}
\item for $\omega$-a.e. $(\gamma,t^-_\gamma,t^+_\gamma)\in \Gamma$ we have
\begin{equation}\label{e:extreme_time}
t^-_\gamma>0 \Rightarrow  \gamma_x(t^-_\gamma ) \in \partial \Omega, \qquad \mbox{and} \qquad 
t^+_\gamma<T \Rightarrow  \gamma_x(t^+_\gamma) \in \partial \Omega.
\end{equation}
\end{enumerate}
Moreover, we say that a Lagrangian representation $\omega$ of $m$ is minimal if
\[
\int_{ \Gamma} \TV_{(0,T)} \gamma_s d\omega(\gamma) = T |\sigma_{\min}|(\Omega),
\]
where $\sigma_{\min}$ is defined in Theorem \ref{t:kin}.
\end{definition}

Given $v \in BV(I;\R^n)$ for some interval $I\subset \R$ we consider the decomposition of the derivative
\[
Dv = \tilde D v + D^j v,
\]
where $\tilde D v$ is the sum of the absolutely continuous and Cantor part of the measure $Dv$ and $D^jv$ is the jump part of $Dv$, see for example \cite{ambrosio}.

\begin{thm}
[\cite{marconi21micromag,marconi21ellipse}]
\label{t:lagr}
Let $\Omega$ be a $C^{1,1}$ open set and $m \in B^{1/3}_{3,\infty}(\Omega)$ solving \eqref{eq:eik} and \eqref{eq:kin}. Then there is a minimal Lagrangian representation $\omega$ of $m$. Moreover the following equalities between measures hold:
\begin{equation}\label{e:kin-lagr}
\begin{split}
\L^1\llcorner [0,T] \times \sigma_{\min} = &~ - \int_\Gamma \sigma_\gamma d\omega(\gamma), \\
\L^1\llcorner [0,T] \times |\sigma_{\min}| = &~ \int_\Gamma |\sigma_\gamma| d\omega(\gamma),
\end{split}
\end{equation}
where 
\begin{equation}\label{e:def-sigma-gamma}
\sigma_\gamma = (\mathrm{Id}, \gamma)_\sharp  \tilde D_t \gamma_s + \mathcal H^1\llcorner E_{\gamma}^+ -\mathcal H^1\llcorner E_{\gamma}^-
\end{equation}
and
\begin{equation}\label{E_def_Epmgamma}
\begin{split}
E_\gamma^+ &:= 
\{(t,x,s) \in (0,T)\times \Omega\times \T: 
\\
&\hspace{2.1em}
\gamma_x(t)=x 
 \mbox{ and } \gamma_s(t-)\le s \le \gamma_s(t+)\le \gamma_s(t-)+\pi \}, \\
E_\gamma^- & := \{(t,x,s) \in (0,T)\times \Omega\times \T:
\\
&\hspace{2.1em} \gamma_x(t)=x  \mbox{ and } \gamma_s(t+)\le s \le \gamma_s(t-)< \gamma_s(t+)+\pi \}.
\end{split}
\end{equation}
Accordingly, for any $\psi \in C^1(\T)$ we can disintegrate the entropy production of $\Phi_\psi$ defined in \eqref{e:Phipsi} along the Lagrangian curves:
\begin{align}\label{e:repr_entr}
\langle \dv\Phi_\psi(m),\zeta\rangle
&
=-\frac1T\int_{\Gamma}  \int_{I_\gamma}\zeta(\gamma_x(t))D(\psi\circ\gamma_s)(dt)\, d\omega(\gamma)\,,
\end{align}
for any $\zeta\in C^1(\Omega)$.
\end{thm}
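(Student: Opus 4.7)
The plan is to produce the Lagrangian representation $\omega$ by a smoothing-and-compactness scheme and then identify its structure from the kinetic picture of Theorem~\ref{t:kin}. First I would mollify $m$ at scale $\e>0$ to obtain $m_\e$, smooth and divergence-free but no longer of unit modulus. The kinetic function $\chi_\e(x,s)=\mathbf 1_{e^{is}\cdot m_\e(x)>0}$ satisfies a transport equation $e^{is}\cdot\nabla_x\chi_\e=\partial_s\sigma_\e$ for a suitable bounded Radon measure $\sigma_\e$ on $\Omega\times\T$, and Proposition~\ref{p:ent_prod}, applied through the entropy representation $\Phi_\psi$ from \eqref{e:Phipsi}, gives the uniform bound $|\sigma_\e|(\Omega\times\T)\lesssim \|m\|_{B^{1/3}_{3,\infty}}^3$.

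Next I would apply a superposition principle in the $(x,s)$-phase space. Since the left-hand side of the kinetic equation is pure transport along the constant vector field $(e^{is},0)$ with source $\partial_s\sigma_\e$, the set $E_{m_\e}$ can be realized at each time $t\in(0,T)$ as the image under the evaluation map of a non-negative measure $\omega_\e\in\mathcal M(\Gamma)$ concentrated on curves $(\gamma_x,\gamma_s)$ solving $\dot\gamma_x=e^{i\gamma_s}$, with $\gamma_s\in\BV$ and total variation controlled by $|\sigma_\e|$; extending each trajectory in both time directions until it exits $\Omega$ gives the boundary condition \eqref{e:extreme_time}, which uses the $C^{1,1}$ regularity of $\partial\Omega$. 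The Lipschitz bound on $\gamma_x$ and the uniform BV bound on $\gamma_s$ make $\{\omega_\e\}$ tight in $\mathcal M(\Gamma)$, and I would pass to a subsequential limit $\omega$ along which $|\sigma_\e|\rightharpoonup|\sigma_{\min}|$.

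The limit $\omega$ inherits the four conditions of Definition~\ref{D_Lagr} by continuity of evaluation and lower semicontinuity of total variation. To prove \eqref{e:kin-lagr} I would verify both inequalities: the bound $\int_\Gamma|\sigma_\gamma|\,d\omega\le \L^1\llcorner[0,T]\times|\sigma_{\min}|$ follows from the construction and lower semicontinuity, while the reverse inequality is obtained by testing \eqref{eq:kin} against smooth functions of $(t,x,s)$ and reconstructing its left-hand side via \eqref{E_repr_formula}. The specific decomposition \eqref{e:def-sigma-gamma} is dictated by the observation that along each trajectory a jump of $\gamma_s$ at time $t$ sweeps an arc in $\T$ that either enters the epigraph $E_m$ (giving the $E_\gamma^+$ contribution) or leaves it (the $E_\gamma^-$ contribution), and these are precisely the two pieces that $\partial_s\sigma_{\min}$ can see along the curve. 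The entropy-production formula \eqref{e:repr_entr} then follows from \eqref{e:ent-kin} composed with \eqref{e:kin-lagr}, which amounts to integration by parts in $s$ against $\psi'$ on each Lagrangian curve.

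The main obstacle will be the precise matching of the jump contributions at points of $J_m$: trajectories may rotate discontinuously there, and the sign, magnitude, and frequency of those rotations must reproduce the explicit density $\bar g_\beta$ and normal $\mathbf n$ from Theorem~\ref{t:kin}. One must ensure that in the limit \emph{no} trajectory wastes rotation by jumping in a way uncorrelated with the local jump structure of $m$; this is exactly what minimality enforces, and its proof rests on the uniqueness of $\sigma_{\min}$ together with a careful selection of the approximating $\omega_\e$ so that the continuous part $\tilde D_t\gamma_s$ and the jump part in \eqref{e:def-sigma-gamma} separately align with the decomposition of $\sigma_{\min}$ on $J_m^c$ and $J_m$ described in Theorem~\ref{t:kin}. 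The construction is carried out in detail in \cite{marconi21micromag,marconi21ellipse}.
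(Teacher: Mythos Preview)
The paper does not give a proof of this theorem: it is quoted verbatim from \cite{marconi21micromag,marconi21ellipse} and no argument is supplied here. There is therefore nothing in the present paper to compare your proposal against; your closing sentence already acknowledges that the construction lives in those external references.

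As a sketch of that external construction, your outline is in the right spirit (superposition over characteristics, tightness, identification of the jump sweeps $E_\gamma^\pm$), but two points are off. First, you invoke Proposition~\ref{p:ent_prod} to get $|\sigma_\e|(\Omega\times\T)\lesssim\|m\|_{B^{1/3}_{3,\infty}}^3$; that proposition is stated only for $\alpha\in(0,1)$, i.e.\ $p=2+\alpha<3$, and its constant blows up as $\alpha\to 1$. The borderline $p=3$ bound you actually need is the one from \cite{GL20} recalled at the start of \S\ref{s:ent_prod}. Second, the route via mollification of $m$ is problematic: since $|m_\e|\neq 1$, the indicator $\chi_\e=\mathbf 1_{e^{is}\cdot m_\e>0}$ does not satisfy the clean transport equation $e^{is}\cdot\nabla_x\chi_\e=\partial_s\sigma_\e$ with a controllable right-hand side in the way you assert; the actual construction in \cite{marconi21micromag,marconi21ellipse} does not pass through smooth approximations of $m$ but builds the curves directly from the kinetic formulation of the finite-entropy solution itself, and obtains minimality by a separate selection argument rather than by arranging $|\sigma_\e|\rightharpoonup|\sigma_{\min}|$ along the approximating sequence.
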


In general it is not true that for any $\zeta\in C^1(\Omega)$ and any $\psi \in C^1(\T)$ it holds
\begin{equation}\label{e:repr_entr_abs}
\langle |\dv\Phi_\psi(m)|,\zeta\rangle
=\frac1T\int_{\Gamma}  \int_{I_\gamma}\zeta(\gamma_x(t))|D(\psi\circ\gamma_s)|(dt)\, d\omega(\gamma).
\end{equation}
Before discussing the validity of a weaker version of the above formula, we prove a result stating that the decomposition in jump part and continuous part of $\nu_{\min}$ is compatible with the corresponding decomposition of Lagrangian trajectories.
\begin{lem}\label{l:jumpLE}
Let $\omega$ be a minimal Lagrangian representation of $m \in B^{1/3}_{3,\infty}(\Omega)$ solving \eqref{eq:eik}.
Then 
\[
\begin{split}
&\nu_{\min}\llcorner J_m =  \frac1T \int_\Gamma (\gamma_x)_\sharp |D^j\gamma_s| d\omega (\gamma), \\
&\nu_{\min}\llcorner (\Omega \setminus J_m) =  \frac1T \int_\Gamma (\gamma_x)_\sharp |\tilde D\gamma_s| d\omega (\gamma).
\end{split}
\]
\end{lem}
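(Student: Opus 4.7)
The plan is to start from the Lagrangian representation identity in Theorem~\ref{t:lagr} and match the $s$-fine structure of its two sides using the dichotomy of $(\sigma_{\min})_x$ provided by Theorem~\ref{t:kin}. Write $\sigma_\gamma = \sigma_\gamma^c + \sigma_\gamma^j$ with $\sigma_\gamma^c := (\mathrm{Id},\gamma)_\sharp \tilde D\gamma_s$ and $\sigma_\gamma^j := \mathcal H^1\llcorner E_\gamma^+ - \mathcal H^1\llcorner E_\gamma^-$. Since $\sigma_\gamma^c$ has no $t$-atom while $\sigma_\gamma^j$ is concentrated at the jump times of $\gamma_s$, these pieces are mutually singular and
\[
|\sigma_\gamma| = (\mathrm{Id},\gamma)_\sharp|\tilde D\gamma_s| + \mathcal H^1\llcorner(E_\gamma^+ \cup E_\gamma^-).
\]
Applying $(t,p_x)_\sharp$ to \eqref{e:kin-lagr} and noting that each $s$-arc of $E_\gamma^\pm$ projects to a single Dirac of weight $|\gamma_s(t+)-\gamma_s(t-)|$, so that $(t,p_x)_\sharp \mathcal H^1\llcorner (E_\gamma^+ \cup E_\gamma^-) = (\mathrm{Id},\gamma_x)_\sharp |D^j\gamma_s|$, and then integrating over $t \in (0,T)$, one obtains the splitting $T\nu_{\min} = \mu_c + \mu_j$, where $\mu_c,\mu_j$ are the two non-negative measures in the statement of the lemma.

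It then suffices to show that $\mu_j$ is concentrated on $J_m$ and $\mu_c$ on $\Omega \setminus J_m$. For the first, consider the auxiliary measure $M_j := \int_\Gamma \mathcal H^1\llcorner(E_\gamma^+ \cup E_\gamma^-)\,d\omega$ on $(0,T) \times \Omega \times \T$. Since $\mathcal H^1$ on any $s$-arc at fixed $(t,x)$ coincides with $\mathcal L^1$ in the $s$ variable, one has $M_j((0,T) \times \Omega \times N) = 0$ for every $\mathcal L^1$-null $N \subset \T$, i.e.\ $M_j$ is $\mathcal L^1$-absolutely continuous in $s$. By Theorem~\ref{t:kin}, $|\sigma_{\min}|_x$ is purely atomic for $\nu_{\min}$-a.e.\ $x \in \Omega \setminus J_m$, hence $\mathcal L^1 \otimes |\sigma_{\min}|$ restricted to $(0,T) \times (\Omega \setminus J_m) \times \T$ is $\mathcal L^1$-singular in $s$. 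Combined with $M_j \le \mathcal L^1 \otimes |\sigma_{\min}|$, this forces $M_j\llcorner (0,T) \times (\Omega \setminus J_m) \times \T = 0$, and projecting to $(t,x)$ and integrating in $t$ yields $\mu_j(\Omega \setminus J_m) = 0$.

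For the complementary identity $\mu_c(J_m)=0$, the task reduces to showing that $\int_\Gamma |\tilde D\gamma_s|(\{t : \gamma_x(t) \in J_m\})\,d\omega(\gamma) = 0$. Because $|\tilde D\gamma_s|$ has no $t$-atom, transversal intersections of $\gamma_x$ with $J_m$ (which form an at-most-countable set of times) contribute nothing; the only possible positive contribution comes from trajectories travelling tangentially along $J_m$ on a time-interval of positive $|\tilde D\gamma_s|$-measure, in which case the characteristic equation \eqref{e:charac} forces $\gamma_s$ to coincide with the tangent angle of $J_m$ along $\gamma_x$. The main obstacle will be to rule out such tangential trajectories $\omega$-almost surely: the argument combines the minimality of $\omega$, via $\int_\Gamma \TV_{(0,T)}\gamma_s\,d\omega = T|\sigma_{\min}|(\Omega)$, with the explicit description of $\bar g_\beta$ and of the $s$-support of $(\sigma_{\min})_x$ on $J_m$ recalled from \cite{marconi21ellipse}, so that the would-be tangential contribution must already be accounted for in $\mu_j = T\nu_{\min}\llcorner J_m$, leaving no residual mass for $\mu_c$ on $J_m$.
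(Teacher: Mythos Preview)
Your argument for $\mu_j(\Omega\setminus J_m)=0$ is correct and genuinely different from the paper's. The paper simply invokes \cite[Lemma~3.5]{marconi21ellipse}, which states that jumps of $\gamma_s$ can occur only over $J_m$; you instead exploit the $s$-structure of $\sigma_{\min}$: the jump part $M_j$ has $(t,x)$-fibres that are Lebesgue measure on arcs (hence absolutely continuous in $s$), whereas $(\sigma_{\min})_x$ is purely atomic in $s$ for $\nu_{\min}$-a.e.\ $x\notin J_m$, and these two features are incompatible with the domination $M_j\le \mathcal L^1\otimes|\sigma_{\min}|$. This is a clean alternative that avoids the external reference, at the price of a slightly more delicate measurability check (one needs the set $\{(t,x,s):s\in\{\mathfrak s(x),\mathfrak s(x)+\pi\}\}$ to be Borel, which follows after fixing a Borel representative of $\mathfrak s$).

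The treatment of $\mu_c(J_m)=0$, however, has a genuine gap. Your final sentence is not an argument: it asserts that minimality together with the form of $\bar g_\beta$ forces the tangential contribution to be ``already accounted for in $\mu_j=T\nu_{\min}\llcorner J_m$'', but at this stage you only know $\mu_j\le T\nu_{\min}\llcorner J_m$ (from $\mu_j(\Omega\setminus J_m)=0$ and $\mu_c\ge0$), so using the claimed equality to conclude $\mu_c(J_m)=0$ is circular. Nor is it clear how to make the tangential analysis rigorous: $J_m$ is merely countably $\mathcal H^1$-rectifiable, so the notion of $\gamma_x$ ``travelling along $J_m$'' with $\gamma_s$ equal to a tangent angle needs care, and the support description of $\bar g_\beta$ you invoke is only stated for $\beta<\pi/4$. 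The paper's route is much more direct: using that $\gamma_s\in BV$ and $\dot\gamma_x=e^{i\gamma_s}$, one adapts \cite[Lemma~3.4]{CHLM22} to show that for $\omega$-a.e.\ $\gamma$ the set $\{t:\gamma_x(t)\in\Sigma\}$ is \emph{finite} for every Lipschitz curve $\Sigma$. Since $J_m$ is a countable union of Lipschitz curves, $\{t:\gamma_x(t)\in J_m\}$ is at most countable, hence $|\tilde D\gamma_s|$-null. No dichotomy between transversal and tangential behaviour is needed.
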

\begin{proof}
It is sufficient to prove that for $\omega$-a.e. $\gamma$ the following holds:
\[
|D^j\gamma_s|(\{\gamma_x \in \Omega \setminus J_m\})=0, \qquad |\tilde D\gamma_s|(\{ \gamma_x \in J_m\})=0.
\]
The first equality follows from
\cite[Lemma~3.5]{marconi21ellipse}
which states that for $\omega$-a.e. $\gamma \in \Gamma$ the following holds: for every $t \in I_\gamma$ such that $\gamma_s(t+)\ne \gamma_s(t-)$, we have
$\gamma_x(t) \in J_m$.
%
The second equality follows from the fact that for $\omega$-a.e. $\gamma\in \Gamma$ the set $\{\gamma_x \in J_m\}$ is at most countable,
since $J_m$ is 
countably 1-rectifiable and $\lbrace\gamma_x\in \Sigma\rbrace$ is finite for any Lipschitz curve $\Sigma$, 
see the proof of \cite[Lemma~3.4]{CHLM22} which adapts directly to our setting.
\end{proof}

We now prove that \eqref{e:repr_entr_abs} holds outside the jump set.

\begin{lem}\label{l:abs_no_jumps}
Let $m \in B^{1/3}_{3,\infty}(\Omega)$ solving \eqref{eq:eik} and \eqref{eq:kin} and $\omega$ be a minimal Lagrangian representation of $m$.
Then for every $\pi$-periodic $\psi \in C^1(\T)$  and any $A \subset \Omega \setminus J_m$ it holds
\[
\begin{split}
[\dv \Phi_\psi (m)]_+ (A) & = \frac1T \int_\Gamma [D(\psi \circ \gamma_s)]_- (\gamma_x \in A) \, d\omega (\gamma),\\
[\dv \Phi_\psi (m)]_- (A) & = \frac1T \int_\Gamma [D(\psi \circ \gamma_s)]_+ (\gamma_x \in A) \, d\omega (\gamma).
\end{split}
\]
In particular
\[
|\dv \Phi_\psi (m)| (A) = \frac1T \int_\Gamma |D(\psi \circ \gamma_s)| (\gamma_x \in A) d\omega (\gamma) \qquad \mbox{for any } A \subset \Omega \setminus J_m.
\]
\end{lem}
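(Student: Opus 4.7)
My plan is to reduce the two $\pm$ identities to a single total-variation equality via a Hahn-decomposition sandwich, and then establish that equality by translating both sides through the kinetic-Lagrangian minimality identity in \eqref{e:kin-lagr}.

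First I would recast the representation formula \eqref{e:repr_entr} as the signed-measure identity $T\,\dv\Phi_\psi(m)=-\int_\Gamma (\gamma_x)_\sharp D(\psi\circ\gamma_s)\,d\omega$ on $\Omega$. Standard Hahn-decomposition estimates then give, for any Borel $A\subset\Omega$,
\[
T[\dv\Phi_\psi(m)]_\pm(A)\leq \int_\Gamma [D(\psi\circ\gamma_s)]_\mp(\gamma_x\in A)\,d\omega,
\]
whose sum is the total-variation bound $T|\dv\Phi_\psi(m)|(A)\leq \int_\Gamma |D(\psi\circ\gamma_s)|(\gamma_x\in A)\,d\omega$. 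If I can prove equality in the total-variation bound for $A\subset\Omega\setminus J_m$, then both one-sided inequalities also become equalities (their sum is tight only if each summand is), which is precisely the statement of the lemma.

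For the left-hand side of that total-variation equality I would use \eqref{e:ent-kin} together with Theorem \ref{t:kin}: on $\Omega\setminus J_m$ we have $(\sigma_{\min})_x=\pm\tfrac12(\delta_{\mathfrak s(x)}+\delta_{\mathfrak s(x)+\pi})$, and the $\pi$-periodicity of $\psi'$ collapses the two deltas into $\int\psi'\,d(\sigma_{\min})_x=\pm\psi'(\mathfrak s(x))$. Hence $|\dv\Phi_\psi(m)|=|\psi'(\mathfrak s)|\nu_{\min}$ on $\Omega\setminus J_m$, or equivalently $|\dv\Phi_\psi(m)|(A)=\int_{A\times\T}|\psi'(s)|\,d|\sigma_{\min}|$. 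The right-hand side I would then expand using the minimality identity $\L^1\otimes|\sigma_{\min}|=\int_\Gamma|\sigma_\gamma|\,d\omega$ in \eqref{e:kin-lagr}, testing against the non-negative function $|\psi'(s)|\mathbf 1_{\{x\in A\}}$ to obtain
\[
T|\dv\Phi_\psi(m)|(A)=\int_\Gamma\int_{\{(t,x,s)\,:\,x\in A\}}|\psi'(s)|\,d|\sigma_\gamma|(t,x,s)\,d\omega.
\]

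To finish I would invoke \eqref{e:def-sigma-gamma} and Lemma \ref{l:jumpLE}: for $\omega$-a.e.\ $\gamma$ the jump pieces $\mathcal H^1\llcorner E_\gamma^\pm$ are concentrated on $\{\gamma_x\in J_m\}$ (by the \cite[Lemma~3.5]{marconi21ellipse} that also powers Lemma \ref{l:jumpLE}), so they drop out on $A\subset\Omega\setminus J_m$ and only the continuous piece $(\mathrm{Id},\gamma_x,\gamma_s)_\sharp|\tilde D_t\gamma_s|$ survives. The inner integral then becomes $|\psi'(\gamma_s(t))|\,d|\tilde D\gamma_s|(t)$ on $\{\gamma_x\in A\}$, which equals $d|\tilde D(\psi\circ\gamma_s)|(t)$ by the BV chain rule; a second appeal to Lemma \ref{l:jumpLE} (jumps of $\gamma_s$ occur only on $\{\gamma_x\in J_m\}$) upgrades $|\tilde D(\psi\circ\gamma_s)|$ to $|D(\psi\circ\gamma_s)|$ on $\{\gamma_x\in A\}$, closing the loop. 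The chief delicacy is sequencing these two Lemma~\ref{l:jumpLE}-type reductions correctly and invoking the $\pi$-periodicity of $\psi$ at the one moment where it is essential, namely when the two delta masses of $(\sigma_{\min})_x$ are collapsed into a single weight $|\psi'(\mathfrak s)|$.
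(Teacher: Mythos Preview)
Your argument is correct. Both proofs rest on the same ingredients --- the atomic structure of $(\sigma_{\min})_x$ outside $J_m$ from Theorem~\ref{t:kin}, the $\pi$-periodicity of $\psi$, Lemma~\ref{l:jumpLE}, and the Lagrangian representation identities --- but organize them differently. The paper argues directly at the level of signs: it partitions $\Omega\setminus J_m$ into $\tilde\Omega^\pm$ according to the sign of $(\sigma_{\min})_x$, identifies the Hahn sets $\tilde A^\pm$ of $\dv\Phi_\psi(m)\llcorner(\Omega\setminus J_m)$, and then uses \eqref{e:kin-lagr}--\eqref{e:def-sigma-gamma} to show that for $\omega$-a.e.\ $\gamma$ the signed measure $\tilde D(\psi\circ\gamma_s)$ has a definite (compatible) sign on $\{\gamma_x\in\tilde A^\pm\}$, whence the $\pm$ identities follow. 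You instead bypass the per-curve sign tracking: you establish the single total-variation identity by testing the second line of \eqref{e:kin-lagr} against $|\psi'(s)|\mathbf 1_{x\in A}$ and invoking the BV chain rule for the diffuse part, then recover the $\pm$ identities from the Hahn sandwich. Your route is slightly more streamlined and makes the role of the minimality identity \eqref{e:kin-lagr} more transparent; the paper's route, on the other hand, exposes explicitly the sign structure along individual trajectories, which is the template reused verbatim in the companion Lemma~\ref{l:pos-neg-jumps}.
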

\begin{proof}
Thanks to Theorem \ref{t:kin}, the measure $(\sigma_{\min})_x$ has a definite sign on $\T$ for $\nu_{\min}$-a.e. $x \in J_m^c$.
More precisely, we may fix two disjoint Borel sets 
$\tilde\Omega^\pm\subset\Omega\setminus J_m$
such that 
\begin{align*}
&\nu_{\min}\big(\Omega\setminus (J_m\cup\tilde\Omega^+\cup\tilde\Omega^-)\big)=0\,,\\
&
(\sigma_{\min})_x  
=  \frac 12 \left( \delta_{\mathfrak s(x)}+ \delta_{\mathfrak s(x)+ \pi}\right)
\geq 0 \text{ on }\T\, \text{ for all }x\in\tilde\Omega^+\,,\\
&
(\sigma_{\min})_x 
=
-  \frac 12 \left( \delta_{\mathfrak s(x)}+ \delta_{\mathfrak s(x)+ \pi}\right)
\leq 0 \text{ on }\T\, \text{ for all }x\in\tilde\Omega^-\,.
\end{align*}
Combining this with \eqref{e:ent-kin} and the fact that $\psi$ is $\pi$-periodic, we see that
\begin{align*}
\dv\Phi_\psi(m)\llcorner (\Omega\setminus J_m)
=\big(\mathbf 1_{\tilde\Omega^-}-\mathbf 1_{\tilde\Omega^+}\big) \big(\psi'\circ \mathfrak s \big) \, \nu_{\min}\llcorner (\Omega\setminus J_m)\,,
\end{align*}
hence the positive, resp. negative, part of the measure
 $\dv\Phi_\psi(m)$ outside $J_m$ is concentrated on the set
 $\tilde A^+$, resp. $\tilde A^-$, where
 \begin{align*}
\tilde A^+ 
&
= \big\lbrace (\mathbf 1_{\tilde\Omega^-}-\mathbf 1_{\tilde\Omega^+}\big) (\psi'\circ \mathfrak s )>0\big\rbrace\,,
\\
\tilde A^- 
&
= \big\lbrace (\mathbf 1_{\tilde\Omega^-}-\mathbf 1_{\tilde\Omega^+}\big) (\psi'\circ \mathfrak s ) < 0\big\rbrace
\,.
 \end{align*} 
Moreover, thanks to \eqref{e:repr_entr} and Lemma~\ref{l:jumpLE}, for any Borel set $A\subset\Omega\setminus J_m$, we have
\begin{align*}
\big(\dv\Phi_\psi(m)\big) (A)
&=-\frac 1T \int_\Gamma \tilde D(\psi\circ \gamma_s)(\lbrace \gamma_x\in A\rbrace)\, d\omega(\gamma)\,.
\end{align*}
Further note that, thanks to  \eqref{e:kin-lagr}-\eqref{e:def-sigma-gamma} and the expression of 
$(\sigma_{\min} )_x$ for $x\notin J_m$,
for $\omega$-a.e. $\gamma\in\Gamma$
 we have
$\gamma_s(t)\in\lbrace \mathfrak s(\gamma_x(t)),\mathfrak s(\gamma_x(t))+\pi\rbrace$  for $\tilde D\gamma_s$-a.e. $t\in I_\gamma$, so the above integrand can be rewritten as
\begin{align*}
\tilde D(\psi\circ \gamma_s)(\lbrace \gamma_x\in A\rbrace)
= 
\int_{I_\gamma} (\psi'\circ \mathfrak s) (\gamma_x(t))\,
\mathbf 1_{\gamma_x(t)\in A} 
\tilde D\gamma_s(dt)\,.
\end{align*}
Moreover, thanks again to \eqref{e:kin-lagr}-\eqref{e:def-sigma-gamma} and Lemma~\ref{l:jumpLE} we have
\begin{align*}
(\tilde D\gamma_s)_+(\{\gamma_x \in \tilde\Omega^+\})=(\tilde D\gamma_s)_-(\{ \gamma_x \in \tilde\Omega^-\})=0\quad\text{for }\omega\text{-a.e. }\gamma\in\Gamma\,.
\end{align*}
From the two last equations and the definitions of $\tilde A^\pm$ we deduce, for $\omega$-a.e. $\gamma\in\Gamma$, that
\begin{align*}
\tilde D(\psi\circ \gamma_s)(\lbrace \gamma_x\in A\rbrace)
\begin{cases}
\geq 0\quad
&\text{if }A\subset \tilde A^-\,,
\\
\leq 0\quad
&\text{if }A\subset \tilde A^+\,,
\end{cases}
\end{align*}
and this implies the claimed decomposition.
\end{proof}

The proof of the above lemma relies on the following properties: for $\nu$-a.e. $x \in \Omega \setminus J_m$  the measure $(\sigma_{\min})_x$ has a definite sign and $\psi'$ does not change sign on $\mathrm{supp}\, (\sigma_{\min})_x$.
Hence the analysis extends to the case of small shocks for particular entropies:
given $\delta>0$ denote by
\begin{equation}\label{e:Jmdelta}
J_m^\delta = \{x \in J_m : m^\pm(x)=e^{is^\pm(x)}, \ |s^+(x)-s^-(x)|> \delta \}.
\end{equation}
\begin{lem}\label{l:pos-neg-jumps}
Let $m \in B^{1/3}_{3,\infty}(\Omega)$ solving \eqref{eq:eik} and \eqref{eq:kin} and $\omega$ be a minimal Lagrangian representation of $m$.
Let $\psi \in C^{0,1}(\T)$ be $\pi$-periodic and let $\Phi_\psi \in \widetilde{\mathrm{ENT}}$ be as in \eqref{e:Phipsi}. Let $\delta \in \left(0, \frac\pi2\right)$ and assume that for every $\tilde s \in \T$ one of the two conditions hold:
\[
\psi'(s)\ge 0 \quad \mbox{for a.e. } s \in [\tilde s, \tilde s + \delta], \qquad \mbox{or} \qquad \psi'(s)\ge 0 \quad  \mbox{for a.e. }  s \in [\tilde s, \tilde s + \delta].
\]
Then for any  $A \subset J_m \setminus J_m^\delta$ it holds
\[
\begin{split}
[\dv \Phi_\psi (m)]_+ (A) & = \frac1T \int_\Gamma [D(\psi \circ \gamma_s)]_- (\gamma_x \in A) \, d\omega (\gamma),\\
[\dv \Phi_\psi (m)]_- (A) & = \frac1T \int_\Gamma [D(\psi \circ \gamma_s)]_+ (\gamma_x \in A) \, d\omega (\gamma).
\end{split}
\]
In particular, we have
\[
|\dv \Phi_\psi (m)| (A) = \frac1T \int_\Gamma |D(\psi \circ \gamma_s)| (\gamma_x \in A) d\omega (\gamma) \quad \mbox{for any } A \subset J_m \setminus J_m^\delta.
\]
\end{lem}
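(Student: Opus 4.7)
I would adapt the proof of Lemma~\ref{l:abs_no_jumps}, replacing the two-atom structure of $(\sigma_{\min})_x$ outside $J_m$ by the small-shock structure $(\sigma_{\min})_x = \mathbf n(x)\cdot e^{i\bar s(x)}\,\bar g_{\beta(x)}(s-\bar s(x))\,\mathcal L^1$ from Theorem~\ref{t:kin} at points $x \in J_m \setminus J_m^\delta$. Writing $\beta(x):=|s^+(x)-s^-(x)|/2\le \delta/2<\pi/4$, the support of $(\sigma_{\min})_x$ is the disjoint union of the two arcs $I_\pm(x):=\bar s(x)\pm\tfrac{\pi}{2}+[-\beta(x),\beta(x)]$, each of length at most $\delta$, separated by a gap of at least $\pi-\delta>\pi/2$; and the density has constant sign $\varepsilon(x):=\sign(\mathbf n(x)\cdot e^{i\bar s(x)})$ on its support (since $\bar g_{\beta(x)}\ge 0$). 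The hypothesis on $\psi$ forces $\psi'$ to have a fixed sign on any arc of length $\delta$, and by $\pi$-periodicity (which identifies $I_+(x)$ with $I_-(x)$ up to translation by $\pi$) this produces a common sign $\eta(x)\in\{+1,-1\}$ of $\psi'$ on $I_+(x)\cup I_-(x)$. Substitution in \eqref{e:ent-kin} then gives $\dv\Phi_\psi(m)\llcorner(J_m\setminus J_m^\delta) = -\varepsilon(x)\eta(x)c(x)\nu_{\min}$ with $c(x)\ge 0$, so $[\dv\Phi_\psi(m)]_\pm\llcorner(J_m\setminus J_m^\delta)$ is concentrated on $\tilde A^\pm:=\{x \in J_m\setminus J_m^\delta : -\varepsilon(x)\eta(x)=\pm 1\}$.

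Next, I would exploit both identities in \eqref{e:kin-lagr} at the level of individual trajectories. The unsigned equality $\int_\Gamma|\sigma_\gamma|\,d\omega=\mathcal L^1\times|\sigma_{\min}|$ forces, for $\omega$-a.e.\ $\gamma$, the measure $|\sigma_\gamma|$ to be supported in $\supp(\mathcal L^1\times\sigma_{\min})$; hence at every jump time $t$ of $\gamma_s$ with $\gamma_x(t)\in J_m\setminus J_m^\delta$, the jump strip $\{t\}\times\{\gamma_x(t)\}\times[\min\gamma_s(t^\pm),\max\gamma_s(t^\pm)]$ lies inside $\{\gamma_x(t)\}\times(I_+(\gamma_x(t))\cup I_-(\gamma_x(t)))$. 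Since this strip is connected with length at most $\pi$ and the two arcs are separated by more than $\pi/2$, the jump interval is entirely contained in one of them. The signed identity $\mathcal L^1\times\sigma_{\min}=-\int_\Gamma\sigma_\gamma\,d\omega$ pins down $\sign(\gamma_s(t^+)-\gamma_s(t^-))$ as a function of $\varepsilon(\gamma_x(t))$; combined with the constant sign $\eta$ of $\psi'$ on the jump interval, the signed jump $D^j(\psi\circ\gamma_s)(\{t\})=\psi(\gamma_s(t^+))-\psi(\gamma_s(t^-))$ has a sign depending only on whether $\gamma_x(t)$ belongs to $\tilde A^+$ or $\tilde A^-$.

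For the conclusion, let $A\subset J_m\setminus J_m^\delta$ be Borel. Applying \eqref{e:repr_entr} to a sequence approximating $\mathbf 1_A$, and using Lemma~\ref{l:jumpLE} which gives $|\tilde D\gamma_s|(\{\gamma_x\in J_m\})=0$ for $\omega$-a.e.\ $\gamma$ (so the continuous part of $D(\psi\circ\gamma_s)$ contributes nothing on $\{\gamma_x\in A\}$), I obtain
\[
\dv\Phi_\psi(m)(A)=-\frac{1}{T}\int_\Gamma D^j(\psi\circ\gamma_s)(\{\gamma_x\in A\})\,d\omega(\gamma).
\]
For $A\subset\tilde A^\pm$, the integrand has definite sign for $\omega$-a.e.\ $\gamma$, consistent with the sign of $\dv\Phi_\psi(m)\llcorner A$; separating positive and negative parts yields the two displayed identities, and the ``in particular'' formula follows by summation. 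The delicate step is the Lagrangian trapping in the second paragraph: both identities in \eqref{e:kin-lagr} must be used simultaneously (neither alone controls both support and orientation on a per-trajectory basis), and the key quantitative input is the smallness $2\beta\le\delta<\pi/2$, which produces the gap that prevents a connected jump from straddling both arcs $I_\pm(\gamma_x(t))$.
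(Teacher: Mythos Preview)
Your proposal is correct and follows essentially the same strategy as the paper's own proof: both split $J_m\setminus J_m^\delta$ according to the sign $\varepsilon(x)=\sign(\mathbf n(x)\cdot e^{i\bar s(x)})$ (the paper's sets $\tilde J^\pm$), use $\beta(x)<\pi/4$ so that $\bar g_{\beta(x)}\ge 0$ gives $(\sigma_{\min})_x$ a definite sign, observe via $\pi$-periodicity that $\psi'$ has a fixed sign on the two short arcs supporting $(\sigma_{\min})_x$, and then combine Lemma~\ref{l:jumpLE} (to kill the continuous part) with the two identities in \eqref{e:kin-lagr}--\eqref{e:def-sigma-gamma} (to confine $\gamma_s(t^\pm)$ to a single arc and fix the orientation of the jump). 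Your write-up is in fact slightly more explicit than the paper's about why a connected jump interval cannot straddle both arcs $I_\pm(x)$ (the gap argument), which is a clean way to justify the step the paper states without elaboration.
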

\begin{proof}
With the same notation as in Theorem \ref{t:kin}, let
\[
\tilde J^+ = \{x \in J_m \setminus J_m^\delta : \mathbf n \cdot e^{i\bar s}>0\}, \quad \tilde J^- = \{x \in J_m \setminus J_m^\delta : \mathbf n \cdot e^{i\bar s}<0\}.
\]
For each $x \in J_m\setminus J_m^\delta$ the corresponding half-amplitude $\beta \in (0, \frac\delta2) \subset (0,\frac\pi4)$. Since in this range of $\beta$ we have $g_\beta\ge 0$, then 
\begin{align*}
&\nu_{\min}\big(J_m \setminus (J_m^\delta\cup\tilde J^+\cup\tilde J^-)\big)=0\,,\\
&
(\sigma_{\min})_x  
= \mathbf{n} \cdot e^{i\bar s} g_\beta (\cdot -\bar s) \mathcal L^1
\geq 0 \text{ on }\T\, \text{ for all }x\in\tilde J^+\,,\\
&
(\sigma_{\min})_x 
=
\mathbf{n} \cdot e^{i\bar s} g_\beta (\cdot -\bar s) \mathcal L^1\leq 0 \text{ on }\T\, \text{ for all }x\in\tilde J^-\,.
\end{align*}
By \eqref{e:ent-kin} it follows that 
\[
\dv \Phi_\psi (m) \llcorner (J_m\setminus J_m^\delta) = \left( \int_\T \big(\mathbf 1_{\tilde\Omega^-}-\mathbf 1_{\tilde\Omega^+}\big) \psi'(s) d |(\sigma_{\min})_x| (s)\right) \nu_{\min}\llcorner (J_m\setminus J_m^\delta).
\]
Moreover, by the assumption on $\psi$, we have that $\psi'$ has constant sign on 
$\supp (\sigma_{\min})_x \subset [\bar s -\frac\delta 2, \bar s + \frac \delta 2] \cup [\bar s -\frac\delta 2 + \pi, \bar s + \frac \delta 2 + \pi]$. Therefore the positive and the negative parts of $\dv \Phi_\psi(m)$ restricted to $J_m\setminus J_m^\delta$ are concentrated respectively on the sets
\[
\begin{split}
 A^{j,+} =  &~ \left\{x \in J_m \setminus J_m^\delta: \, \exists s \in \supp  (\sigma_{\min})_x \, ,\,  \big(\mathbf 1_{\tilde J^-}(x)-\mathbf 1_{\tilde J^+}(x)\big) \psi'(s)>0 \right\}, 
\\
 A^{j,-} = &~ \left\{x \in J_m \setminus J_m^\delta: \, \exists s \in \supp  (\sigma_{\min})_x \, ,\,  \big(\mathbf 1_{\tilde J^-}(x)-\mathbf 1_{\tilde J^+}(x)\big)\psi'(s)>0 \right\}.
\end{split}
\]
Thanks to \eqref{e:repr_entr} and Lemma~\ref{l:jumpLE}, for any Borel set $A\subset J_m\setminus J_m^\delta$, we have
\begin{align}\label{e:jump-part}
\big(\dv\Phi_\psi(m)\big) (A)
&=-\frac 1T \int_\Gamma  D^j(\psi\circ \gamma_s)(\lbrace \gamma_x\in A\rbrace)\, d\omega(\gamma)\, \\
&=-\frac 1T \int_\Gamma \sum_{t \in J_\gamma}\left( \psi(\gamma_s(t+))-\psi(\gamma_s(t-)) \right) \mathbf 1_{\{\gamma_x(t)\in A\}} \, d\omega(\gamma).
\nonumber
\end{align}
Further note that, thanks to  \eqref{e:kin-lagr}-\eqref{e:def-sigma-gamma} and the properties of 
$(\sigma_{\min} )_x$ for $x\in J_m\setminus J_m^\delta$,
for $\omega$-a.e. $\gamma\in\Gamma$
 we have
$\gamma_s(t+),\gamma_s(t-)\in [ \bar s (\gamma_x(t))-\frac\delta2,  \bar s (\gamma_x(t))+\frac\delta2]$  or $\gamma_s(t+),\gamma_s(t-)\in [ \bar s (\gamma_x(t))-\frac\delta2+\pi ,  \bar s (\gamma_x(t))+\frac\delta2 + \pi]$ for $ D^j\gamma_s$-a.e. $t\in I_\gamma$.
Moreover, thanks again to \eqref{e:kin-lagr}-\eqref{e:def-sigma-gamma} and Lemma~\ref{l:jumpLE} we have
\begin{align*}
( D^j\gamma_s)_+(\{\gamma_x\in \tilde J^+\})=( D^j\gamma_s)_-(\{\gamma_x \in \tilde J^-\})=0\quad\text{for }\omega\text{-a.e. }\gamma\in\Gamma\,.
\end{align*}
From the above property, the definitions of $A^{j,\pm}$ and \eqref{e:jump-part}we deduce, for $\omega$-a.e. $\gamma\in\Gamma$, that
\begin{align*}
 D^j(\psi\circ \gamma_s)(\lbrace \gamma_x\in A\rbrace)
\begin{cases}
\geq 0\quad
&\text{if }A\subset  A^{j,-}\,,
\\
\leq 0\quad
&\text{if }A\subset  A^{j,+}\,,
\end{cases}
\end{align*}
and this implies the claimed decomposition.
\end{proof}

\section{Rectifiability}\label{s:rectif2}

The proof of Theorem~\ref{t:rectif} 
relies on the following estimate.

\begin{prop}\label{p:estim_small_jumps}
Let $m\in B^{1/3}_{3,\infty}(\Omega;\R^2)$ a weak solution of the eikonal equation \eqref{eq:eik} and $\omega$ a minimal  Lagrangian representation of $m$.

For every $\delta\in (0,\frac\pi8)$ and $\alpha\in (0,1)$, we have
\begin{align*}
&
\int_{\Gamma} 
|D\gamma_s|
\big(
\lbrace 
t\in I_\gamma\colon \gamma_x(t) \in \Omega \setminus J_m^\delta, \ 
|D\gamma_s|(\lbrace t\rbrace )\leq \delta\rbrace\big)
\,d\omega(\gamma)
\\
&
\lesssim \delta^{1-\alpha}
\bigg(\bigvee_{\|\Phi\|_{C^{1,\alpha}}\leq 1}|\dv\Phi(m)|\bigg)(\Omega \setminus J_m^\delta)\,,
\end{align*}
where the supremum of measures is taken over all entropies $\Phi\in\widetilde{\mathrm{ENT}}$ such that $\|\Phi\|_{C^{1,\alpha}}\leq 1$.
\end{prop}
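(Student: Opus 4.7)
The plan is to construct, for each shift $\tau$ in a suitable parameter range, a single Lipschitz entropy $\Phi_{\psi^\tau}\in\widetilde{\mathrm{ENT}}$ whose $C^{1,\alpha}$-norm scales like $\delta^{1-\alpha}$, whose derivative has constant sign on every interval of length $\delta$, and for which an averaging in $\tau$ converts $|D(\psi^\tau\circ\gamma_s)|$ into a constant multiple of $|D\gamma_s|$ on the continuous part and on atoms of size $\le\delta$. Lemmas~\ref{l:abs_no_jumps} and~\ref{l:pos-neg-jumps} then link the left-hand side of the proposition to these Lagrangian integrals, while the $C^{1,\alpha}$-bound produces the right-hand side through the supremum measure.

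\textbf{Construction of the sawtooth.} Since $\delta<\pi/8$, we may pick $\delta'\in[2\delta,4\delta]$ with $\pi=2N\delta'$, $N\in\mathbb{N}$, and define the $\pi$-periodic sawtooth $\psi_0\colon\T\to\R$ with $\psi_0'\in\{-1,+1\}$ alternating on intervals of length $\delta'$. For $\tau\in[0,2\delta')$ set $\psi^\tau(s)=\psi_0(s-\tau)$. Then $\psi^\tau$ is $\pi$-periodic with $\|\psi^\tau\|_{L^\infty}\le\delta'\lesssim\delta$ and $\mathrm{Lip}(\psi^\tau)\le 1$, so by interpolation $\|\psi^\tau\|_{C^{0,\alpha}}\lesssim\delta^{1-\alpha}$. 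From \eqref{e:Phipsi}, the associated $\Phi_{\psi^\tau}\in\widetilde{\mathrm{ENT}}$ inherits $\|\Phi_{\psi^\tau}\|_{C^{1,\alpha}}\lesssim\delta^{1-\alpha}$. Because $\delta'>\delta$, every interval of length $\delta$ sits inside a single monotonicity interval of $\psi^\tau$, so the sign hypothesis of Lemma~\ref{l:pos-neg-jumps} is satisfied for every $\tau$. I would smooth $\psi^\tau$ on a scale $\eta\ll\delta'-\delta$, preserving every estimate, so that Lemma~\ref{l:abs_no_jumps} (stated for $C^1$ entropies) also applies; the limit $\eta\to 0$ causes no difficulty.

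\textbf{Lagrangian conversion and averaging.} Setting $A=\Omega\setminus J_m^\delta=(\Omega\setminus J_m)\cup(J_m\setminus J_m^\delta)$, Lemmas~\ref{l:abs_no_jumps} and \ref{l:pos-neg-jumps} combine to give, for every $\tau$,
\begin{align*}
|\dv\Phi_{\psi^\tau}(m)|(A)=\frac1T\int_\Gamma|D(\psi^\tau\circ\gamma_s)|\big(\{\gamma_x\in A\}\big)\,d\omega(\gamma)\,.
\end{align*}
By the Vol'pert chain rule, $|D(\psi^\tau\circ\gamma_s)|=|(\psi^\tau)'\circ\gamma_s|\,|\tilde D\gamma_s|+\sum_{t\in J_{\gamma_s}}|\psi^\tau(\gamma_s(t+))-\psi^\tau(\gamma_s(t-))|\delta_t$, with $|(\psi^\tau)'|=1$ a.e. For an atom $[a,a+h]$ of size $h\le\delta$, one checks directly that the interval $[a-\tau,a+h-\tau]$ meets a break point of $\psi_0'$ precisely for $\tau$ in a subset of $[0,2\delta')$ of total measure $2h$, on which the partial cancellation integrates to $h^2$, while on the complementary subset of measure $2\delta'-2h$ the increment equals $h$ in absolute value, so that
\begin{align*}
\int_0^{2\delta'}|\psi^\tau(\gamma_s(t+))-\psi^\tau(\gamma_s(t-))|\,d\tau=2\delta' h-h^2\ge\delta'h\,,
\end{align*}
using $h\le\delta\le\delta'/2$. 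Integrating the previous identity in $\tau$ and combining with the trivial contribution from the continuous part yields
\begin{align*}
\int_0^{2\delta'}|\dv\Phi_{\psi^\tau}(m)|(A)\,d\tau\ge\frac{\delta'}{T}\int_\Gamma|D\gamma_s|\big(\{t\in I_\gamma:\gamma_x(t)\in A,\ |D\gamma_s|(\{t\})\le\delta\}\big)\,d\omega\,.
\end{align*}
The left-hand side is bounded by $2\delta'\cdot C\delta^{1-\alpha}\bigvee_{\|\Phi\|_{C^{1,\alpha}}\le 1}|\dv\Phi(m)|(A)$ uniformly in $\tau$, so dividing by $\delta'$ gives the claimed inequality.

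The main obstacle is the averaging computation above, which requires both the elementary integral lower bound and the simultaneous validity of the sign condition of Lemma~\ref{l:pos-neg-jumps} across all shifts $\tau$; the latter is secured by the choice $\delta'$ strictly larger than $\delta$, which is possible exactly because $\delta<\pi/8$.
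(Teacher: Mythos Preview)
Your averaging strategy is a genuinely different route from the paper's. The paper uses a \emph{single-bump} profile $\psi_{\bar s}$ (monotone on $[\bar s,\bar s+2\delta]$, then flat until the reflection point), takes a \emph{finite} family of shifts $\bar s_1,\dots,\bar s_N$, and invokes a localization lemma (Lemma~\ref{l:small_jumps}) to trap the $x$-position of every small Lagrangian jump into sets $A_j$ with bounded overlap; summing over $j$ then yields the supremum measure. Your continuous averaging in $\tau$ would bypass Lemma~\ref{l:small_jumps} and the finite-overlap bookkeeping, which is appealing.

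However, there is a genuine gap. The assertion ``because $\delta'>\delta$, every interval of length $\delta$ sits inside a single monotonicity interval of $\psi^\tau$'' is false: any interval of length $\delta$ centered at a peak or trough of the sawtooth straddles two adjacent monotonicity intervals, and $(\psi^\tau)'$ changes sign on it. (The smoothing at scale $\eta$ does not help: the mollified derivative still passes from $+1$ to $-1$ over the same region.) Consequently the hypothesis of Lemma~\ref{l:pos-neg-jumps} fails for every $\tau$, and on $J_m\setminus J_m^\delta$ you retain only the triangle-inequality direction
\[
|\dv\Phi_{\psi^\tau}(m)|(A)\;\le\;\frac1T\int_\Gamma|D(\psi^\tau\circ\gamma_s)|\big(\{\gamma_x\in A\}\big)\,d\omega\,,
\]
which is the wrong direction for your argument: you need the Lagrangian total variation bounded \emph{above} by the entropy production. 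Your $\tau$-averaging identity $\int_0^{2\delta'}|\psi^\tau(a+h)-\psi^\tau(a)|\,d\tau=2\delta'h-h^2$ is correct, but it lower-bounds a quantity that you have not controlled from above by $|\dv\Phi_{\psi^\tau}(m)|(A)$.

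The fix is simple and keeps your approach intact: replace the sawtooth by a \emph{trapezoidal} wave, inserting flat plateaus of length at least $\delta$ between ramps of length $\ge 2\delta$ (this is possible with $\pi$-periodicity precisely because $\delta<\pi/8$). Then any interval of length $\delta$ meets at most one ramp, so $(\psi^\tau)'$ has constant sign on it and Lemma~\ref{l:pos-neg-jumps} applies for every $\tau$. The averaging lower bound survives with a slightly worse constant (the plateaus contribute nothing, but the ramp fraction is still bounded below), and the $C^{1,\alpha}$ estimate $\|\Phi_{\psi^\tau}\|_{C^{1,\alpha}}\lesssim\delta^{1-\alpha}$ is unchanged.
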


\begin{proof}[Proof of Theorem~\ref{t:rectif}]
Let $m\colon \Omega\to\R^2$ a weak solution of the eikonal equation \eqref{eq:eik} such that $m\in B^{1/p}_{p,\infty}(\Omega)$ for some $p\in [1,3)$.
Since $B^{1/p}_{p,\infty}\cap L^\infty \subset B^{1/q}_{q,\infty}$ for all $q \geq p$, we assume without loss of generality that $2<p<3$ and may write $p=2+\alpha$ for some $\alpha\in (0,1)$.
Thanks to Proposition~\ref{p:ent_prod}, the supremum measure appearing in Proposition~\ref{p:estim_small_jumps} is finite.
Letting $\delta\to 0$, we deduce 
\begin{align*}
\int_{\Gamma} 
|D\gamma_s|
\big(
\lbrace 
t\in I_\gamma\colon \gamma_x(t) \in \Omega \setminus J_m, \ 
|D\gamma_s|(\lbrace t\rbrace )=0\rbrace\big)
\,d\omega(\gamma) =0\,.
\end{align*}
In other words, for $\omega$-a.e. $\gamma\in \Gamma$, the measure $|D\gamma_s|$ only has a jump part.
Thanks to the representation formula \eqref{e:repr_entr} and Lemma \ref{l:jumpLE}, this implies that the entropy dissipation is concentrated on the jump set $J_m$ for every entropy $\Phi \in \mathrm{ENT}$.
\end{proof}

\begin{proof}[Proof of Proposition~\ref{p:estim_small_jumps}]
We fix $\delta\in (0,\pi/16)$ and start by defining a family of functions $\psi$ bounded in $C^{0,\alpha}$ and to which Lemma~\ref{l:pos-neg-jumps} can be applied.
We define $\psi_0\in C^{0,1}(\T;\R)$, odd, $\pi$-periodic, even with respect to $\pi/4$, and given on $[0,\pi/4]$ by the nondecreasing piecewise affine function
\begin{align*}
\psi_0(t)=\begin{cases}
t/\delta^{1-\alpha}
 &\text{ for }0\leq t\leq 2\delta,
\\
2\delta^{\alpha} & \text{ for }2\delta < t\leq \pi/4\,.
\end{cases}
\end{align*}
This function $\psi_0$ generates the family $\psi_{\bar s}=\psi_0(\cdot -\bar s)$, for $\bar s\in\T$.
The corresponding entropies $\Phi_{\bar s}=\Phi_{\psi_{\bar s}}$, defined as in \eqref{e:Phipsi}, satisfy the bound
$\|\psi_{\bar s}\|_{C^{1,\alpha}}\lesssim 1$. 

For any $\bar s\in \T$ and any $\gamma\in\Gamma$  we have
\begin{align*}
&
|D\gamma_s|
\big(
\lbrace
t\in I_\gamma\colon 
\gamma(t^-)\text{ and }\gamma(t^+)\in [\bar s-\delta,\bar s +\delta]
\rbrace
\big)
\\
&
\leq \delta^{1-\alpha} |D(\psi_{\bar s}\circ\gamma_s)|
\big(
\lbrace
t\in I_\gamma\colon 
\gamma(t^-)\text{ and }\gamma(t^+)\in [\bar s-\delta,\bar s +\delta]
\rbrace
\big)\,.
\end{align*}
We fix a uniform partition $0=\bar s_1 <\bar s_2 < \cdots <\bar s_N <\bar s_{N+1}=2\pi$ of $[0,2\pi]$ such that
$\delta/2 <  \bar s_{j+1}-\bar s_j \leq \delta$. 
Then we have the covering
\begin{align*}
\T\subset\bigcup_{j=1}^N \bar I_j,\quad \bar I_j = [\bar s_j-\delta,\bar s_j +\delta]\,,
\end{align*}
 each interval $\bar I_j$ intersects at most 4 of the other intervals from this covering,
 and any interval of length $\delta$ is contained in one of these intervals.
Thus we have
\begin{align*}
&
|D\gamma_s|
\big(
\lbrace 
t\in I_\gamma\colon 
|D\gamma_s|(\lbrace t\rbrace )\leq \delta\rbrace\big)
\\
&
\leq
\delta^{1-\alpha} 
\sum_{j=1}^N 
|D(\psi_{\bar s_j}\circ\gamma_s)|
\big(
\lbrace
t\in I_\gamma\colon 
\gamma(t^-)\text{ and }\gamma(t^+)\in \bar I_j
\rbrace
\big)\,.
\end{align*}
Invoking Lemma~\ref{l:small_jumps} below, we see that
\begin{align*}
\lbrace
t\in I_\gamma\colon 
\gamma(t^-)\text{ and }\gamma(t^+)\in \bar I_j
\rbrace
\subset
\lbrace 
t\in I_\gamma\colon \gamma_x(t)\in A_j
\rbrace \cup Z_j\,,
\end{align*}
where $|D\gamma_s|(Z_j)=0$ and
\begin{align*}
A_j
=\Big\lbrace x\in\Omega\colon 
&
\exists\tilde s\in \bar I_j\,,
\;
x\text{ is either }
\\
&
\text{a jump point of }m
\text{ with  normal }\mathbf n\in \lbrace ie^{i\tilde s},e^{i\tilde s}\rbrace \,,
\\
&
\text{or a Lebesgue point of } \mathfrak{s}
\text{ with value }
\mathfrak s (x)\in \lbrace \tilde s, \tilde s + \pi\rbrace
\Big\rbrace\,.
\end{align*}
In the definition above, we refer to the function $\mathfrak s$ defined in Theorem \ref{t:kin} and we consider Lebesgue points with respect to the measure $\nu \llcorner (\Omega \setminus J_m)$.
The finite intersection property of the intervals $\bar I_j$ implies that each set $A_j$ intersects at most 16 of the other sets $A_k$, $k\neq j$.
Moreover, thanks to Lemma~\ref{l:abs_no_jumps} and Lemma \ref{l:pos-neg-jumps}, we have
\begin{align*}
|\dv\Phi_{\bar s_j}(m)|(A_j \setminus J_m^\delta)
&
=\int_{\Gamma} |D(\psi_{\bar s_j}\circ\gamma_s)|(\lbrace \gamma_x\in A_j\setminus J_m^\delta\rbrace )\, d\omega(\gamma)\,.
\end{align*}
Gathering the above properties, we deduce
\begin{align*}
&
\int_{\Gamma}
|D\gamma_s|
\big(
\lbrace 
t\in I_\gamma\colon \gamma_x(t) \in \Omega \setminus J_m^\delta, \ 
|D\gamma_s|(\lbrace t\rbrace )\leq \delta\rbrace\big)
\,
d\omega(\gamma)
\\
&
\lesssim
\delta^{1-\alpha}\sum_{j=1}^N |\dv\Phi_{\bar s_j}(m)|(A_j\setminus J_m^\delta)\,.
\end{align*}
And the finite intersection property of the $A_j$'s, together with the boundedness of the family $\Phi_{\bar s}$ in $C^{1,\alpha}$ implies that the last sum is
controlled by the supremum measure appearing in Proposition~\ref{p:estim_small_jumps}.
\end{proof}

\begin{lem}\label{l:small_jumps}
Let $m\in B^{1/3}_{3,\infty}(\Omega;\R^2)$ be a weak solution of the eikonal equation \eqref{eq:eik} and $\omega$ be a minimal Lagrangian representation of $m$. Then for $\omega$-a.e. $\gamma\in\Gamma$ we have the following.

For any $\delta>0$, any $\bar s\in\T$ and $|D\gamma_s|$-a.e. $t\in I_\gamma$, 
if 
\begin{align*}
\gamma(t^-)\text{ and }\gamma(t^+)\in [\bar s - \delta,\bar s +\delta]\,,\qquad  \mbox{and}\qquad \gamma_x(t) \in \Omega \setminus J_m^\delta,
\end{align*}
 then there exists $\tilde s\in [\bar s - \delta,\bar s +\delta]$ such that
$x=\gamma_x(t)$ is either a jump point of $m$ with normal $n\in \lbrace ie^{i\tilde s},e^{i\tilde s}\rbrace$ or a Lebesgue point of $\mathfrak s$ with value $\mathfrak s(x)\in\lbrace \tilde s, \tilde s + \pi \rbrace$.
\end{lem}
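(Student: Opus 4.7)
The plan is to split $D\gamma_s = \tilde D\gamma_s + D^j\gamma_s$ into its continuous and jump parts and treat each separately, combining the structural description of $(\sigma_{\min})_x$ from Theorem~\ref{t:kin}, the Lagrangian identities \eqref{e:kin-lagr}--\eqref{e:def-sigma-gamma}, and the assignment of Lagrangian times to $J_m$ versus $\Omega \setminus J_m$ provided by Lemma~\ref{l:jumpLE}. Throughout, the hypothesis $\gamma(t^\pm) \in [\bar s - \delta, \bar s + \delta]$ is read as a condition on the angular component $\gamma_s(t^\pm)$.

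For the continuous part, Lemma~\ref{l:jumpLE} gives, for $\omega$-a.e.\ $\gamma$ and $|\tilde D\gamma_s|$-a.e.\ $t$, that $x := \gamma_x(t) \in \Omega \setminus J_m$, and $\gamma_s$ is continuous there, so $\gamma_s(t^-) = \gamma_s(t^+) =: \gamma_s(t)$. By Theorem~\ref{t:kin}, on $\Omega \setminus J_m$ the conditional measure is $(\sigma_{\min})_x = \pm \frac12(\delta_{\mathfrak s(x)} + \delta_{\mathfrak s(x)+\pi})$; combining this with \eqref{e:kin-lagr}--\eqref{e:def-sigma-gamma} (exactly as in the proof of Lemma~\ref{l:abs_no_jumps}) forces $\gamma_s(t) \in \{\mathfrak s(x), \mathfrak s(x) + \pi\}$. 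Besicovitch differentiation applied to the Radon measure $\nu_{\min}\llcorner J_m^c$ gives that $\mathfrak s$ has Lebesgue points $\nu_{\min}$-a.e.\ on $\Omega \setminus J_m$, and the push-forward identity from Lemma~\ref{l:jumpLE} transfers this to $\omega \otimes |\tilde D\gamma_s|$-a.e.\ $(\gamma, t)$. The choice $\tilde s := \gamma_s(t) \in [\bar s - \delta, \bar s + \delta]$ completes this case.

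For the jump part, Lemma~\ref{l:jumpLE} instead gives $x = \gamma_x(t) \in J_m$, and $x \notin J_m^\delta$ forces the half-amplitude $\beta \leq \delta/2$. By Theorem~\ref{t:kin}, $(\sigma_{\min})_x$ is then supported on two disjoint arcs of length $2\beta$ centered at $\bar s_m \pm \pi/2$ (where $\bar s_m$ is defined by $m^\pm(x) = e^{i(\bar s_m \pm \beta)}$), separated by $\pi - 2\beta > \pi/2$. The pieces $\mathcal H^1 \llcorner E_\gamma^\pm$ in \eqref{e:def-sigma-gamma} enter non-negatively in $|\sigma_\gamma|$, and the equality $\mathcal L^1 \times |\sigma_{\min}| = \int |\sigma_\gamma|\, d\omega$ from \eqref{e:kin-lagr} precludes any jump contribution escaping $\supp (\sigma_{\min})_x$. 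Since the jump interval $[\gamma_s(t^-), \gamma_s(t^+)]$ is connected it lies in a single arc, and since $[\bar s - \delta, \bar s + \delta]$ has diameter $2\delta < \pi - 2\beta$ and contains both endpoints, this arc is uniquely determined, say $[\bar s_m + \pi/2 - \beta, \bar s_m + \pi/2 + \beta]$. The divergence-free jump condition $(m^+ - m^-) \cdot \mathbf n = 0$ gives $\mathbf n \parallel e^{i\bar s_m}$, so choosing $\tilde s := \bar s_m + \pi/2$ one obtains $\mathbf n \in \{\pm i e^{i\tilde s}\}$, while $|\tilde s - \gamma_s(t^\pm)| \leq \beta \leq \delta/2$ places $\tilde s$ within an $O(\delta)$ neighbourhood of $\bar s$ (a universal constant factor that is harmless in the subsequent use in Proposition~\ref{p:estim_small_jumps}).

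The main technical obstacle is extracting the \emph{pointwise} containment of the jump interval in $\supp (\sigma_{\min})_{\gamma_x(t)}$ from the integrated identity \eqref{e:kin-lagr}: the jump terms $\mathcal H^1 \llcorner E_\gamma^\pm$ contribute non-negatively to $|\sigma_\gamma|$, so no cancellation can occur and the containment must hold for $\omega$-a.e.\ $\gamma$ and $|D^j\gamma_s|$-a.e.\ $t$. This mechanism already underpins the analogous continuous-part step of Lemma~\ref{l:abs_no_jumps} and the large-jump analysis of Lemma~\ref{l:pos-neg-jumps}, so the same disintegration argument applies. With the containment in hand, the remaining geometric observations on $\T$ are elementary.
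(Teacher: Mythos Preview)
Your argument is correct and follows the same route as the paper's own proof: both reduce to the single fact that, for $\omega$-a.e.\ $\gamma$ and $|D\gamma_s|$-a.e.\ $t$, one has $\gamma_s(t^\pm)\in\supp(\sigma_{\min})_{\gamma_x(t)}$, and then read off the two cases from the explicit description of that support in Theorem~\ref{t:kin}. The paper states this containment in one line as a consequence of Theorem~\ref{t:lagr} (and the references behind it), whereas you reconstruct it by splitting $D\gamma_s=\tilde D\gamma_s+D^j\gamma_s$ via Lemma~\ref{l:jumpLE} and invoking the ``no cancellation'' structure of $|\sigma_\gamma|$ in \eqref{e:kin-lagr}--\eqref{e:def-sigma-gamma}; this is a legitimate (if slightly more laborious) way to reach the same endpoint, though note that the heuristic ``contributes non-negatively, so no cancellation'' needs to be turned into a genuine disintegration argument rather than evaluated at a single $(t,x)$.

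One remark on the constant-factor issue you flag in the jump case: the paper's proof is equally brief on this point and does not verify the exact containment $\tilde s\in[\bar s-\delta,\bar s+\delta]$ either. Your observation that the arc center $\bar s_m+\pi/2$ may sit only in $[\bar s-O(\delta),\bar s+O(\delta)]$ is correct, and as you say this only changes the finite-overlap constant in Proposition~\ref{p:estim_small_jumps}. A cleaner choice is to take $\tilde s:=\gamma_s(t^+)$ (which is in $[\bar s-\delta,\bar s+\delta]$ by hypothesis), accept that $\mathbf n$ is then only equal to $\pm i e^{i\tilde s}$ up to an $O(\beta)$ rotation, and observe that for the application one actually only needs ``$x$ is a jump point with normal direction within $O(\delta)$ of $e^{i\bar s}$ modulo $\pi/2$''; this is exactly what the finite-intersection count of the sets $A_j$ uses.
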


\begin{proof}[Proof of Lemma~\ref{l:small_jumps}]
It follows from Theorem \ref{t:lagr} that for $\omega$-a.e. $\gamma \in \Gamma$ the following holds: for $|D\gamma_s|$-a.e. $t \in I_\gamma$
\[
\gamma_s(t+), \gamma_s(t-) \in \mathrm{supp}\, (\sigma_{\min})_{\gamma_x(t)}.
\]
Recall that $\nu_{\min}(A)=0$ implies $\int_{\Gamma}|D\gamma_s|(\{\gamma_x \in A\}) d \omega (\gamma)=0$.
Since for $\nu_{\min}$-a.e. $x \in J _m\setminus J_m^\delta$ the support of $(\sigma_{\min})_x$ is contained in $[\tilde s - \frac\pi2 - \delta,  \tilde s -\frac\pi2 + \delta] \cup [\tilde s + \frac\pi2 - \delta,  \tilde s +\frac\pi2 + \delta] $ where $\tilde s\in \T$ is such that $e^{i\tilde s} = {\bf n}(x)$ and for $\nu_{\min}$-a.e. $x \in \Omega \setminus J_m$ the support of $(\sigma_{\min})_x$ is contained in $\{ \mathfrak s(x)-\frac \pi2,  \mathfrak s(x)+\frac \pi2\}$, then the claim follows.
\end{proof}

\section*{Acknowledgments} 
XL is supported by the ANR project ANR-22-CE40-0006.
EM is member of the GNAMPA group of INDAM and acknowledges the hospitality of the Institut de Math\'ematiques de Toulouse, where part of the work has been done.

\small

\bibliographystyle{acm}

\begin{thebibliography}{10}

\bibitem{ADM}
{\sc Ambrosio, L., De~Lellis, C., and Mantegazza, C.}
\newblock Line energies for gradient vector fields in the plane.
\newblock {\em Calc. Var. Partial Differential Equations 9}, 4 (1999),
  327--255.

\bibitem{ambrosio}
{\sc Ambrosio, L., Fusco, N., and Pallara, D.}
\newblock {\em Functions of bounded variation and free discontinuity problems}.
\newblock Oxford Mathematical Monographs. The Clarendon Press, Oxford
  University Press, New York, 2000.

\bibitem{avilesgiga99}
{\sc Aviles, P., and Giga, Y.}
\newblock On lower semicontinuity of a defect energy obtained by a singular
  limit of the {G}inzburg-{L}andau type energy for gradient fields.
\newblock {\em Proc. Roy. Soc. Edinburgh Sect. A 129}, 1 (1999), 1--17.

\bibitem{CHLM22}
{\sc Contreras~Hip, A.~A., Lamy, X., and Marconi, E.}
\newblock Generalized characteristics for finite entropy solutions of
  {B}urgers' equation.
\newblock {\em Nonlinear Anal. 219\/} (2022), Paper No. 112804.

\bibitem{DeI}
{\sc De~Lellis, C., and Ignat, R.}
\newblock A regularizing property of the {$2D$}-eikonal equation.
\newblock {\em Comm. Partial Differential Equations 40}, 8 (2015), 1543--1557.

\bibitem{ODL03}
{\sc De~Lellis, C., and Otto, F.}
\newblock Structure of entropy solutions to the eikonal equation.
\newblock {\em J. Eur. Math. Soc. (JEMS) 5}, 2 (2003), 107--145.

\bibitem{dkmo01}
{\sc DeSimone, A., M{\"u}ller, S., Kohn, R.~V., and Otto, F.}
\newblock A compactness result in the gradient theory of phase transitions.
\newblock {\em Proc. Roy. Soc. Edinburgh Sect. A 131}, 4 (2001), 833--844.

\bibitem{GL20}
{\sc Ghiraldin, F., and Lamy, X.}
\newblock Optimal {Besov} differentiability for entropy solutions of the
  eikonal equation.
\newblock {\em Commun. Pure Appl. Math. 73}, 2 (2020), 317--349.

\bibitem{jabinperthame01}
{\sc Jabin, P.-E., and Perthame, B.}
\newblock Compactness in {G}inzburg-{L}andau energy by kinetic averaging.
\newblock {\em Comm. Pure Appl. Math. 54}, 9 (2001), 1096--1109.

\bibitem{JK00}
{\sc Jin, W., and Kohn, R.~V.}
\newblock Singular perturbation and the energy of folds.
\newblock {\em J. Nonlinear Sci. 10}, 3 (2000), 355--390.

\bibitem{LP23facto}
{\sc Lorent, A., and Peng, G.}
\newblock Factorization for entropy production of the eikonal equation and
  regularity.
\newblock {\em Indiana Univ. Math. J. 72}, 3 (2023), 1055--1105.

\bibitem{marconi21ellipse}
{\sc Marconi, E.}
\newblock Characterization of minimizers of {A}viles-{G}iga functionals in
  special domains.
\newblock {\em Arch. Ration. Mech. Anal. 242}, 2 (2021), 1289--1316.

\bibitem{marconi21micromag}
{\sc Marconi, E.}
\newblock Rectifiability of entropy defect measures in a micromagnetics model.
\newblock {\em Advances in Calculus of Variations\/} (2021),
  000010151520210012.

\bibitem{triebel83}
{\sc Triebel, H.}
\newblock {\em Theory of function spaces}, vol.~78 of {\em Monogr. Math.,
  Basel}.
\newblock Birkh{\"a}user, Cham, 1983.

\end{thebibliography}

\end{document}